\newtheorem{theorem}{Theorem}[section]
\theoremstyle{plain}
\newtheorem{conjecture}[theorem]{Conjecture}
\newtheorem{lemma}[theorem]{Lemma}
\newtheorem{corollary}[theorem]{Corollary}
\newtheorem{observation}[theorem]{Observation}
\newtheorem{problem}[theorem]{Problem}
\theoremstyle{definition}
\newtheorem{definition}[theorem]{Definition}
\newtheorem{example}[theorem]{Example}
\def\P{\mathcal{P}_q}
\begin{document}

\title{From the Ising and Potts models to the general graph homomorphism polynomial}

\author{Klas Markstr\"om }
\maketitle

%
%

\section{Introduction}
A graph homomorphism from a graph $G$ to a graph $H$ is a mapping $h:V(G)\rightarrow V(H)$ such that $h(u)\sim h(v)$ if $u\sim v$. Graph homomorphisms are 
well studied objects and, for suitable choices of either $G$ or $H$, many classical graph properties can be formulated in terms of homomorphisms. For example the 
question of wether there exists a homomorphism from $G$ to $H=K_q$ is the same as asking wether $G$ is $q$-colourable or not.  A number of classical models 
in statistical physics, such as the Ising model, Potts model and lattice gas, can be formulated in terms of the generating function for weighted versions of homomorphisms 
from $G$ to some graph $H$.  We refer the reader to \cite{HN} for a comprehensive survey of the algebraic aspects of graph homomorphisms.

Our aim here is to discuss the generating polynomial for homomorphisms from a graph $G$ to the most general weighted graph on $q$ vertices.  For a fixed $q$ 
this is an object of polynomial size  which contains a wealth of informations about the graph $G$, but as we will later show it is not a complete graph invariant.  We will first 
defined this generating function as a polynomial, then recall the definitions of a number of well known graph polynomials, and partition functions from physics, and then proceed to study the properties and relationships of these polynomials.

Let us give the formal definitions of our objects of study.
\begin{definition}
	Given a weighted graph $H$ with a weight function $w$ which assigns a weight to each edge and vertex of $H$ and a homomorphism $\phi$ from a graph
	 $G$ to $H$ we define the weight of $\phi$ to be
	$$w(\phi) =\prod_{v\in V(G)} w(\phi(v))  \prod_{e\in E(G)} w(e) $$	
\end{definition}
We let  $\textrm{Hom}(G,H)$ denote the set of all homomorphisms from $G$ to $H$.

\begin{definition}\label{Id2}
    Let $W_q$ be a weighted complete graph on $q>1$ vertices where vertex 
    $i$ has weight $x_{i}$ and an edge ${i,j}$ gets weight $y_{ij}$, where each weight is a formal variable.
    Note the $i$ and $j$ may be equal.
   
    Let $$\P(G)=\sum_{\phi\in \textrm{Hom}(G,W_q)}w(\phi)$$
    We call $\mathcal{P}_q(G)$ the \emph{homomorphism polynomial} of order $q$ of $G$, or simply the homomorphism polynomial of $G$ when the order is clear from the context.
\end{definition}

\begin{lemma}
      If $G$ has $n$ vertices and $m$ edges then  $\mathcal{P}_q(G)$  is a polynomial in $2q+{q \choose 2}$ variables and each monomial of the polynomial has 
      total degree at most $n$ in the $x_i$'s and at most $m$ in the $y_{i,j}$'s.
      
       $\mathcal{P}_q(G)$  is a symmetric polynomial in the variables  $x_i,\ldots,x_q$.
\end{lemma}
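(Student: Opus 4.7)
The plan is to unpack Definition~\ref{Id2} directly; none of the three assertions requires more than bookkeeping.

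For the variable count and the degree bound, note that $w$ attaches one formal variable to each vertex of $W_q$ and one to each edge, loops included. This gives $q$ vertex variables $x_i$, $q$ loop variables $y_{ii}$, and $\binom{q}{2}$ proper edge variables $y_{ij}$ with $i<j$, for a total of $2q+\binom{q}{2}$. For any $\phi\in\textrm{Hom}(G,W_q)$ the weight
\[ w(\phi)=\prod_{v\in V(G)}w(\phi(v))\prod_{e\in E(G)}w(e) \]
is a single monomial with exactly $n$ factors drawn from the $x_i$'s and exactly $m$ factors drawn from the $y_{ij}$'s. Since $\mathcal{P}_q(G)$ is a sum of such monomials with nonnegative integer coefficients (no cancellation is possible), every monomial in $\mathcal{P}_q(G)$ has total $x$-degree exactly $n$ and total $y$-degree exactly $m$, which is stronger than the stated bound.

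For the symmetry claim, my approach is to exhibit an $S_q$-action on the set of homomorphisms. For $\sigma\in S_q$ the map $\phi\mapsto\sigma\circ\phi$ is a bijection on $\textrm{Hom}(G,W_q)$, since $\sigma$ is an automorphism of the underlying (loopy) complete graph on $\{1,\dots,q\}$. A direct computation shows that $w(\sigma\circ\phi)$ is obtained from $w(\phi)$ by the simultaneous substitution $x_i\mapsto x_{\sigma(i)}$, $y_{ij}\mapsto y_{\sigma(i)\sigma(j)}$. Summing the bijection over all $\phi$ gives invariance of $\mathcal{P}_q(G)$ under this combined $S_q$-action on the $x$- and $y$-variables.

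The only obstacle is verbal rather than mathematical: the polynomial is \emph{not} symmetric in the $x_i$ alone, as seen already for $G=K_2$ and $q=2$, where $\mathcal{P}_2(G)=x_1^2 y_{11}+2x_1 x_2 y_{12}+x_2^2 y_{22}$ and swapping $x_1\leftrightarrow x_2$ without also swapping $y_{11}\leftrightarrow y_{22}$ changes the polynomial. I would therefore state the third conclusion as the simultaneous $S_q$-invariance proved above, which is what the construction actually delivers.
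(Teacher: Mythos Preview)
The paper states this lemma without proof, so there is no argument to compare against; your proposal supplies exactly the routine unpacking of Definition~\ref{Id2} that the paper omits, and each step is correct. Your observation that the degrees are in fact \emph{exactly} $n$ and $m$ (not merely bounded by them) is a useful sharpening, and your remark that the symmetry only holds under the simultaneous $S_q$-action on both the $x$- and $y$-variables, not on the $x_i$ alone, is well taken and worth stating explicitly.
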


\begin{example}
	The homomorphism polynomial of order $q=3$ for the three vertex path $P_3$ is
	\begin{multline}
		\mathcal{P}_3(P_3)=x_1^3 y_{1,1}^2 + 2 x_1^2 x_2 y_{1,1} y_{1,2} + x_1^2 x_2 y_{1,2}^2 + 	x_1 x_2^2 y_{1,2}^2 +  2 x_1^2 x_3 y_{1,1} y_{1,3} + \\ 
		2 x_1 x_2 x_3 y_{1,2} y_{1,3} + x_1^2 x_3 y_{1,3}^2 +  x_1 x_3^2 y_{1,3}^2 + 2 x_1 x_2^2 y_{1,2} y_{2,2} + \\x_2^3 y_{2,2}^2 + 
		2 x_1 x_2 x_3 y_{1,2} y_{2,3} + 2 x_1 x_2 x_3 y_{1,3} y_{2,3} + 2 x_2^2 x_3 y_{2,2} y_{2,3} +  \\ x_2^2 x_3 y_{2,3}^2 + 
		x_2 x_3^2 y_{2,3}^2 + 2 x_1 x_3^2 y_{1,3} y_{3,3} +  2 x_2 x_3^2 y_{2,3} y_{3,3} + x_3^3 y_{3,3}^2
	\end{multline}
\end{example}

\begin{lemma}\label{com}
	The homomorphism polynomial of order $q$ for the complete graph on $n$ vertices can be computed in time $\mathcal{O}(n^q)$ and is given by

	$$\P(K_n)=\sum_{(n_1,\ldots,n_q)\in S(n,q)}\prod_{i=1}^{q} x_i^{n_i}\prod_{i=1}^{q}x_{ii}^{{n_i \choose 2}}\prod_{i<j}x_{ij}^{n_in_j}
	$$
	Here $S(n,q)$ is the set of ordered partitions into $q$ nonnegative integers  of $n$ .
\end{lemma}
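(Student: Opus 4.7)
The plan is to exploit the fact that $W_q$ is a complete graph equipped with loops at every vertex (the definition explicitly permits $i=j$ in the edge $\{i,j\}$), so the homomorphism condition from $K_n$ to $W_q$ is vacuous: every function $\phi : V(K_n) \to \{1,\ldots,q\}$ is a homomorphism. I would then parameterise $\textrm{Hom}(K_n, W_q)$ by the multinomial profile $(n_1, \ldots, n_q) \in S(n,q)$ defined by $n_i = |\phi^{-1}(i)|$, after observing that $w(\phi)$ depends only on this profile.

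Step one is to evaluate the common weight $w(\phi)$ in terms of the profile. The vertex factor is immediately $\prod_{i=1}^{q} x_i^{n_i}$. For the edges I would split the $\binom{n}{2}$ pairs of vertices of $K_n$ into those whose two endpoints lie in the same fibre $\phi^{-1}(i)$ (there are $\binom{n_i}{2}$ such pairs, each contributing a factor $y_{i,i}$) and those with endpoints in distinct fibres $i<j$ (there are $n_i n_j$ such pairs, each contributing $y_{i,j}$). This reproduces exactly the product appearing on the right-hand side. Step two is to count how many $\phi$ realise a fixed profile; this is the multinomial coefficient $\binom{n}{n_1, \ldots, n_q}$. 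Summing the profile-weight over $S(n,q)$, weighted by this count, yields the identity. I note in passing that the displayed formula appears to be missing this multinomial factor (the case $K_2$ with $q=2$ already shows it is needed, since the two bijective maps each contribute $x_1 x_2 y_{1,2}$) and that the symbols $x_{ii}, x_{ij}$ should read $y_{i,i}, y_{i,j}$; I will treat both as typographical.

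For the running time I would bound $|S(n,q)| = \binom{n+q-1}{q-1} = \mathcal{O}(n^{q-1})$ and argue that, with binomial coefficients tabulated once, each summand can be assembled in $\mathcal{O}(q^2)$ arithmetic operations, comfortably inside the claimed $\mathcal{O}(n^q)$. There is no serious obstacle: the argument is a direct unfolding of Definition \ref{Id2} together with multinomial counting, and the real content of the lemma is the observation that $w(\phi)$ is a class function of the profile, collapsing the sum over the $q^n$ homomorphisms to a sum over polynomially many terms.
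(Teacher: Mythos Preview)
The paper states this lemma without proof, so there is nothing to compare against; your argument is the natural one and is correct. You rightly observe that $W_q$ carries a loop at every vertex, making every map $V(K_n)\to\{1,\ldots,q\}$ a homomorphism, and your computation of $w(\phi)$ in terms of the fibre sizes $(n_1,\ldots,n_q)$ together with the multinomial count is exactly what is needed. Your two typographical corrections are also on the mark: the displayed formula should use $y_{i,i}$ and $y_{i,j}$ for the edge weights, and it is indeed missing the multinomial coefficient $\binom{n}{n_1,\ldots,n_q}$ (your $K_2$, $q=2$ check confirms this). The running-time bound via $|S(n,q)|=\binom{n+q-1}{q-1}=\mathcal{O}(n^{q-1})$ is fine.
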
 

\section{Graph polynomials and partition functions}
In this section we will recall the definitions for several graph polynomials and the partition functions for different models in statistical physics which in different 
ways relate to the graph homomorphism polynomial.

The first of these  is the \emph{chromatic polynomial} of a graph, first introduced by Whitney \cite{Whi} as part of his work on the four-colour conjecture.
\begin{definition}
	$C(G,q)=\textrm{The number of proper } $q-$\textrm{colourings of } G$.
\end{definition}
It is no immediate that this at all a polynomial, but as shown by Whitney for a graph $G$ with $n$ vertices this is in fact a monic polynomial of degree $n$.

The chromatic polynomial is a specialization of the well kown Tutte-polynomial, first introduced by Tutte in \cite{Tu:47} as the most general graph polynomial 
satisfying certain relations when edges are contracted or deleted.  This polynomial has been studied in many different parametrisations,  related by  variable substitutions,
 and the one which we will state is the one connected to the random-cluster model.
\begin{definition}
	Given a graph $G$ with $n$ vertices and $m$ edges:	
	$$T(G, p,q)=\sum_{A\subset E(G)} p^{|A|}(1-p)^{m-|A|}q^{k(A)},$$
	where $k(A)$ denotes the number of connected components in the subgraph induced by vertices in $G$ and the edges in $A$
\end{definition}
By setting $q$ to a fixed integer in $T(G,p,q)$  we obtain the \emph{$q$-state Potts partition function}  $Z_q(G,p)$, which is the partition function for the 
Potts model, one of the most studied models in statistical physics.  In the special case where $q=2$ this also called the Ising polynomial, and is the partition 
function for the Ising model.  We refer the reader to \cite{Grim} for a book length treatment of the probabilistic aspects of these models, and to \cite{sok} for the 
more combinatorial aspects.  

There is also a two-variable polynomial connected to the ising polynomial. We will here follow the treatment in \cite{AM}.

A \emph{state} $\sigma$ on $G$ is a function $\sigma 
:V(G)\rightarrow\{ -1,1 \}$, the value of $\sigma$ at a vertex $v$ is 
called the \emph{magnetisation} of $v$. 
\begin{definition}
    Given a state $\sigma$ the energy $E(\sigma,e)$ of an edge 
    $e=(u,v)$ in $G$ is $E(\sigma,e)=\sigma(u)\sigma(v)$, and the 
    energy $E(\sigma) $of the state $\sigma$ is the sum of the energies 
    of the edges, that is 
    $$E(\sigma)=\sum_{e\in E(G)}E(\sigma,e).$$
\end{definition}
\begin{definition}
    The magnetisation $M(\sigma)$ of a state $\sigma$ is the sum of 
    the magnetisations of all the vertices in $G$, that is,
    $$M(\sigma)=\sum_{u \in V(G)}\sigma(u).$$
\end{definition}Let $\Omega$ denote the set of all states on $G$.

We can now define the \emph{Bivariate Ising polynomial}:
\begin{definition}\label{Id1}
    The Bivariate Ising polynomial is  
    $$Z(G,x,y)=\sum_{\sigma\in \Omega}x^{E(\sigma)}y^{M(\sigma)}=
    \sum_{i,j}{a_{i,j}x^i y^j}.$$
\end{definition}
The Ising polynomial mentioned earlier can be obtained by setting $y=1$ in the bivariate Ising polynomial, substituting $x$ for a rational function in $p$ and expanding. We refer to \cite{AM} for full discussion of this.

The bivariate Ising polynomial is also closely connected to the \emph{van der Waerden-polynomial}, a polynomial studied van der Waerden in \cite{VDW}.
\begin{definition}[van der Waerden Polynomial]
    $$W(G,t,u)=\sum_{i,j}b_{i,j}u^i t^j,$$
    where $b_{i,j}$ is the number of subgraphs of $G$ with $i$ edges 
    and $j$ vertices of odd degree.
\end{definition}
As shown in \cite{AM} the bivariate Ising polynomial and the van der Waerden-polynomial are equivalent and are related to each other by a change of variables.

A specialisation of the van der Waerden-polynomial is the \emph{matching polynomial}.
\begin{definition}
    The matching polynomial of a graph $G$ is  
    $$m(G,x)=\sum_{i} m_i x^i,$$
    where $m_i$ is the number of matching with $i$ edges in $G$.
\end{definition}
This polynomial corresponds to the part of the van der Waerden-polynomial which counts subgraphs with $i$ edges and $2i$ vertices of odd degree.

We finally have two more polynomials which are partition functions for related models from statistical physics.  First we have the \emph{Independence polynomial}:
\begin{definition}
    The independence polynomial of a graph $G$ is  
    $$I(G,x)=\sum_{i} c_i x^i,$$
    where $c_i$ is the number of independent sets with $i$ vertices in $G$.
\end{definition}
If we take the independence polynomial of the line graph $L(G)$ of $G$ we obtain the matching-polynomial of $G$, i.e. $I(L(G),x)=m(G,x)$.

The independence polynomial is a specialisation of the partition function for the \emph{hard-core lattice gas}.
\begin{definition}
    $$H(G,h,x)=\sum_{A\subset V(G)} h^{||G[A]||}x^{|A|},  $$
    where $||G[A]||$ denotes the number of edges in the subgraph of $G$ induced by the vertex set $A$. 
\end{definition}
The independence polynomial corresponds to the part of this partition function where the exponent of $h$ is 0, or likewise $I(G,x)=H(G,0,x)$.  In \cite{Bax} the reader can 
find a treatment of the analytical and statistical physics side of the lattice gas model for different graphs.

\section{Basic properties of the homomorphism polynomial}
The homomorphism polynomial satisfies a number of relationships similar to those which hold for other classical graph polynomials.

\begin{theorem}\label{th1}
    If $G$ has components $G_{1}$ and $G_{2}$ then    $\P(G)=\P(G_{1})\P(G_{2})$.
\end{theorem}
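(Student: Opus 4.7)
The plan is to establish a natural bijection between homomorphisms from $G$ into $W_q$ and pairs of homomorphisms from the two components, and then show that this bijection is weight-multiplicative, so that summing factors.

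First I would set up the bijection. Since $G_1$ and $G_2$ are disjoint components, we have $V(G) = V(G_1) \sqcup V(G_2)$ and $E(G) = E(G_1) \sqcup E(G_2)$, with no edges of $G$ having one endpoint in each component. Define the map
$$\Phi : \textrm{Hom}(G, W_q) \longrightarrow \textrm{Hom}(G_1, W_q) \times \textrm{Hom}(G_2, W_q), \quad \phi \mapsto (\phi|_{V(G_1)}, \phi|_{V(G_2)}).$$
The adjacency-preservation condition for $\phi$ decomposes into the independent conditions on edges of $G_1$ and on edges of $G_2$ (there are no cross-edges to worry about), so the restrictions are indeed homomorphisms, and conversely any pair $(\phi_1,\phi_2)$ of homomorphisms glues to a well-defined homomorphism on $G$. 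Hence $\Phi$ is a bijection.

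Next I would check that weights multiply under $\Phi$. From Definition~\ref{Id2} together with the definition of $w(\phi)$,
$$w(\phi) = \prod_{v\in V(G)} w(\phi(v)) \prod_{e\in E(G)} w(e) = \left(\prod_{v\in V(G_1)} w(\phi(v)) \prod_{e\in E(G_1)} w(e)\right)\left(\prod_{v\in V(G_2)} w(\phi(v)) \prod_{e\in E(G_2)} w(e)\right),$$
using only the disjointness of vertex and edge sets, and this factorization is exactly $w(\phi|_{V(G_1)})\cdot w(\phi|_{V(G_2)})$.

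Finally I would sum over all homomorphisms, using the bijection to rewrite the sum as a double sum and then factor:
$$\mathcal{P}_q(G) = \sum_{\phi\in\textrm{Hom}(G,W_q)} w(\phi) = \sum_{\phi_1\in\textrm{Hom}(G_1,W_q)}\sum_{\phi_2\in\textrm{Hom}(G_2,W_q)} w(\phi_1)\,w(\phi_2) = \mathcal{P}_q(G_1)\,\mathcal{P}_q(G_2).$$
There is no real obstacle here; the only thing to be careful about is making explicit that the absence of cross-component edges is what lets an arbitrary pair of component-homomorphisms be combined into a homomorphism of $G$, and that the vertex-weight product splits cleanly because every vertex of $G$ lies in exactly one component.
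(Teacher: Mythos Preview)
Your proof is correct and follows exactly the same approach as the paper: the paper's proof is the one-line observation that a homomorphism $\phi$ from $G$ to $W_q$ consists of two independent homomorphisms $\phi_1\in\textrm{Hom}(G_1,W_q)$ and $\phi_2\in\textrm{Hom}(G_2,W_q)$ with $w(\phi)=w(\phi_1)w(\phi_2)$, which is precisely the bijection and weight-multiplicativity you spell out in detail.
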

\begin{proof}
    Immediate by Definition \ref{Id2} since any homomorphism $\phi$ from $G$ to $W_q$ consists of two independent homomorphisms, $\phi_1\in \textrm{Hom}(G_1,W_q)$,   
     $\phi_2\in \textrm{Hom}(G_2,W_q)$, and  $w(\phi)=w(\phi_1)w(\phi_2)$.
\end{proof}
The Tutte-polynomial can be factored in the same way for disconnected graphs, and  it is also possible to factor the Tutte-polynomial as $T(G,x,y)=T(G_1,x,y)T(G_2,x,y)$ if $V(G)=V(G_1)\cup V(G_2)$ and $V(G_1)\cap V(G_2)$ is a single vertex.   As a consequence it follows that all trees on $n$ vertices have the same Tutte-polynomial. However, as we will later see, there are trees with the same number of vertices but distinct homomorphism polynomials an hence no similar vertex cut expression exists for $\P(G)$.

\begin{theorem}\label{th2}
    The homomorphism  polynomial of $G$ determines the homomorphism  polynomial of the complement $\overline{G}$.
\end{theorem}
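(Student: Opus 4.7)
The plan is to group the terms of $\P(G)$ by the \emph{type} of the homomorphism and exploit that $E(G)$ and $E(\overline{G})$ partition the edge-set of the complete graph $K_n$ on $V(G)$.

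First, for each $\phi \in \textrm{Hom}(G, W_q)$ set $n_i = |\phi^{-1}(i)|$; the vertex weight $\prod_v x_{\phi(v)}$ equals $\prod_i x_i^{n_i}$, a function of the type $(n_1,\dots,n_q)$ alone. Grouping $\P(G)$ by type yields the expansion
$$\P(G) = \sum_{(n_1,\dots,n_q) \in S(n,q)} \Bigl(\prod_{i=1}^q x_i^{n_i}\Bigr) S_{(n_i)}(G)(y),$$
where $S_{(n_i)}(G)(y) := \sum_{\phi \text{ of type } (n_i)} \prod_{uv \in E(G)} y_{\phi(u)\phi(v)}$. Because the monomials $\prod_i x_i^{n_i}$ are pairwise distinct, each $S_{(n_i)}(G)(y)$ appears as a coefficient of $\P(G)$ and is therefore determined by it. Note that $n$ itself is recoverable from $\P(G)$ as the maximum total $x$-degree.

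Second, for any $\phi$ of type $(n_i)$, the number of edges of $K_n$ mapped to the pair $\{i,j\}$ equals $n_in_j$ when $i \ne j$ and $\binom{n_i}{2}$ when $i = j$ — a quantity depending only on the type. Since $E(G)$ and $E(\overline{G})$ partition $E(K_n)$,
$$\prod_{uv \in E(\overline{G})} y_{\phi(u)\phi(v)} = \frac{M_{(n_i)}(y)}{\prod_{uv \in E(G)} y_{\phi(u)\phi(v)}}, \qquad M_{(n_i)}(y) := \prod_{i} y_{ii}^{\binom{n_i}{2}} \prod_{i<j} y_{ij}^{n_i n_j}.$$
Summing over $\phi$ of fixed type $(n_i)$ turns this into the identity $S_{(n_i)}(\overline{G})(y) = M_{(n_i)}(y)\, S_{(n_i)}(G)\bigl(y_{ij} \mapsto y_{ij}^{-1}\bigr)$, and reassembling gives
$$\P(\overline{G}) = \sum_{(n_i)\in S(n,q)} \Bigl(\prod_i x_i^{n_i}\Bigr) M_{(n_i)}(y)\, S_{(n_i)}(G)\bigl(y_{ij} \mapsto y_{ij}^{-1}\bigr),$$
which is manifestly a function of $\P(G)$ alone.

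There is no deep obstacle — the argument is essentially bookkeeping — but one step worth verifying is that the right-hand side is genuinely a polynomial and not merely a Laurent polynomial. This is automatic: each edge of $G$ contributes exactly one $y$-factor, so $S_{(n_i)}(G)(y)$ has degree at most $n_i n_j$ in $y_{ij}$ for $i\ne j$ and at most $\binom{n_i}{2}$ in $y_{ii}$, and multiplication by $M_{(n_i)}(y)$ after inverting the $y$'s clears every negative exponent.
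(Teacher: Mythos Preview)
Your proof is correct and follows essentially the same idea as the paper's: read off the type $(n_1,\dots,n_q)$ from the $x$-exponents of a monomial, observe that the $y$-exponents for $K_n$ are determined by the type alone, and use that $E(G)$ and $E(\overline{G})$ partition $E(K_n)$ so the $y$-exponents for $\overline{G}$ are the difference. The paper states this in one sentence at the level of individual monomials, whereas you package it via the explicit substitution $y_{ij}\mapsto y_{ij}^{-1}$ and the factor $M_{(n_i)}(y)$, and also verify the polynomiality of the result; the underlying argument is the same.
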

\begin{proof}
	Assume that $G$ has $n$ vertices. Given the exponents of the variables $x_1,\ldots,x_q$ in a monomial of $\P(G)$ we can compute what the exponents of the 
	$y$-variables would be in the complete graph on $n$ vertices, and the sum of the exponents of $y_{ij}$ in $\P(G)$ and $\P(\overline{G})$ is equal to the exponent in $\P(K_n)$.
\end{proof}
The Tutte-polynomial does not have this property, as can be seen by considering trees and their complements, but e.g the matching polynomial does \cite{God}.

The \emph{join} of two graphs $G_{1}$ and $G_{2}$ is the graph obtained by taking the disjoint union of the two graphs and
adding an edge from every vertex in $G_{1}$ to every vertex in $G_{2}$.
\begin{corollary}
    If $G$ is the join of two graphs $G_{1}$ and $G_{2}$, then $\P(G)$ can be constructed in polynomial time from $\P(G_1)$ and $\P(G_2)$.
\end{corollary}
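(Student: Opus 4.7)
The plan is to exploit the fact that $W_q$ is complete (with loops), so every map $V(G)\to V(W_q)$ is automatically a homomorphism. If $G = G_1+G_2$, then any map $\phi$ is determined by its restrictions $\phi_1,\phi_2$ to $V(G_1),V(G_2)$, and conversely any pair $(\phi_1,\phi_2)$ of maps yields a homomorphism of the join. Thus the sum over $\textrm{Hom}(G,W_q)$ factors as a double sum over $\phi_1\in\textrm{Hom}(G_1,W_q)$ and $\phi_2\in\textrm{Hom}(G_2,W_q)$.

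Next I would split the weight $w(\phi)$ into the contributions from $V(G_1)\cup E(G_1)$, from $V(G_2)\cup E(G_2)$, and from the cross-edges of the join. The first two pieces are exactly $w(\phi_1)$ and $w(\phi_2)$. For the cross-edge contribution, note that if $n_i$ denotes the size of $\phi_1^{-1}(i)$ and $m_j$ denotes the size of $\phi_2^{-1}(j)$, then the product over all cross-edges equals
$$C(\mathbf{n},\mathbf{m})=\prod_{i}y_{ii}^{\,n_i m_i}\prod_{i<j}y_{ij}^{\,n_i m_j+n_j m_i},$$
which depends only on the vectors $\mathbf{n},\mathbf{m}$.

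The crucial observation is that the tuple $\mathbf{n}$ is completely encoded in the $x$-part of the monomial $w(\phi_1)$: the exponent of $x_i$ in $w(\phi_1)$ is exactly $n_i$. So I would group the monomials of $\mathcal{P}_q(G_1)$ by $x$-signature, writing
$$\mathcal{P}_q(G_1)=\sum_{\mathbf{n}}P_{\mathbf{n}}(\mathbf{y})\prod_i x_i^{n_i},\qquad \mathcal{P}_q(G_2)=\sum_{\mathbf{m}}Q_{\mathbf{m}}(\mathbf{y})\prod_i x_i^{m_i},$$
where $P_{\mathbf{n}},Q_{\mathbf{m}}$ are polynomials in the $y$-variables only. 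Combining with the factorization of $w(\phi)$ gives the closed form
$$\mathcal{P}_q(G_1+G_2)=\sum_{\mathbf{n},\mathbf{m}}P_{\mathbf{n}}(\mathbf{y})Q_{\mathbf{m}}(\mathbf{y})\,C(\mathbf{n},\mathbf{m})\prod_i x_i^{n_i+m_i}.$$

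Finally I would verify the polynomial-time claim. The grouping by $x$-signature can be read off $\mathcal{P}_q(G_1)$ and $\mathcal{P}_q(G_2)$ directly; there are at most $O(n^q)$ signatures on each side by Lemma \ref{com}-style counting, so $O(n^{2q})$ pairs. For each pair the factor $C(\mathbf{n},\mathbf{m})$ is a single monomial computable in constant (in $n$) time for fixed $q$, and multiplying the two polynomials $P_{\mathbf{n}}$ and $Q_{\mathbf{m}}$, each of size polynomial in $|G_1|+|G_2|$ for fixed $q$, is polynomial. The main (minor) obstacle is just being careful with the double-counting of edges $\{i,j\}$ versus $\{j,i\}$ in the cross-edge product, i.e. writing $C(\mathbf{n},\mathbf{m})$ in the canonical form with $i\le j$; once that bookkeeping is right the construction is immediate.
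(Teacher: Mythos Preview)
Your argument is correct, but it takes a different route from the paper's. The paper's proof is a two-line reduction: it uses the identity $G_1+G_2\cong\overline{\,\overline{G_1}\cup\overline{G_2}\,}$, then invokes Theorem~\ref{th1} (disjoint unions multiply) and Theorem~\ref{th2} (complementation is polynomial-time computable from $\P$). So from $\P(G_1),\P(G_2)$ one passes to $\P(\overline{G_1}),\P(\overline{G_2})$, multiplies, and complements again.

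Your approach instead computes $\P(G_1+G_2)$ directly, exploiting that $W_q$ is complete with loops so the homomorphism set factors, and that the cross-edge weight depends only on the $x$-signatures $\mathbf{n},\mathbf{m}$ already visible in the monomials of $\P(G_1)$ and $\P(G_2)$. This yields an explicit closed formula for $\P(G_1+G_2)$ that the paper's argument only gives implicitly after unwinding two complementations and a product. The paper's proof is shorter and more modular (it shows the corollary is formally forced by Theorems~\ref{th1} and~\ref{th2}); yours is self-contained and arguably more informative computationally, since it avoids the detour through $\P(K_{n_1+n_2})$ that complementation entails.
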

\begin{proof}
	This follows directly from Theorems \ref{th1} and \ref{th2} since the join of $G_1$ and $G_2$ is isomorphic to $\overline{ \overline{G_1} \cup \overline{G_2}}$.
\end{proof}

As an application of the previous corollary we can extend Lemma \ref{com} to compete multipartite graph as well.
\begin{corollary}
	The homomorphism polynomial of the complete multipartite graph $K_{t_1,t_2,\ldots,t_k}$ can be constructed in polynomial time. 
\end{corollary}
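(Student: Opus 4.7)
My plan is to exploit the identity
$$\overline{K_{t_1,t_2,\ldots,t_k}} \;=\; K_{t_1}\cup K_{t_2}\cup\cdots\cup K_{t_k},$$
and combine it with the three earlier results: Lemma \ref{com}, Theorem \ref{th1}, and Theorem \ref{th2}. The preceding corollary about joins could be iterated instead, but I prefer going through the complement directly because it keeps the complementation step outside the loop.

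First I will compute each $\P(K_{t_i})$ in time $\mathcal{O}(t_i^q)$ via Lemma \ref{com}. Next, by iterated application of Theorem \ref{th1}, I obtain
$$\P(\overline{K_{t_1,\ldots,t_k}}) \;=\; \prod_{i=1}^{k}\P(K_{t_i})$$
by successively multiplying the polynomials produced in the first step. Finally I will invoke Theorem \ref{th2} to recover $\P(K_{t_1,\ldots,t_k})$ from $\P(\overline{K_{t_1,\ldots,t_k}})$; as the proof of that theorem indicates, the transformation acts monomial-by-monomial and requires only $\P(K_n)$ with $n=\sum_i t_i$, which Lemma \ref{com} again supplies in polynomial time.

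The only thing that needs checking, and the place where I expect the main (still mild) obstacle to lie, is that all intermediate polynomials stay polynomial in size, so that the $k-1$ multiplications and the final complementation all run in polynomial time. Since every monomial of $\P(H)$ for an $N$-vertex, $M$-edge graph $H$ lives in a fixed polynomial ring on $2q+\binom{q}{2}$ variables and has total degree at most $N+M$, for fixed $q$ the number of distinct monomials in any such $\P(H)$ is polynomial in $N+M$, which for our intermediate graphs is polynomial in $n$. Consequently each polynomial multiplication in the second step is polynomial time, and so is the monomial-by-monomial transformation in the third step, giving an overall polynomial-time construction of $\P(K_{t_1,t_2,\ldots,t_k})$ for fixed $q$.
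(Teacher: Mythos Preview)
Your proof is correct and follows essentially the same route as the paper: the paper's argument is simply that $K_{t_1,\ldots,t_k}\cong\overline{K_{t_1}\cup\cdots\cup K_{t_k}}$ and then invokes Theorems~\ref{th1} and~\ref{th2}. You add the explicit reference to Lemma~\ref{com} and the size-bound check on the intermediate polynomials, which the paper leaves implicit but which are indeed needed for the polynomial-time claim.
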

\begin{proof}
	This follows from Theorems \ref{th1} and \ref{th2} since the complete multipartite graph is isomorphic to $\overline{K_{t_1}\cup\ldots\cup K_{t_k}}$.
\end{proof}

Finally we note that if we increase the value of $q$ all information from lower values remain.
\begin{lemma}\label{mon}
	$\mathcal{P}_q(G)$ determines $\mathcal{P}_{q'}(G)$ for all $q'<q$.
\end{lemma}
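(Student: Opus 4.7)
The plan is to exhibit an explicit substitution in the variables of $\mathcal{P}_q(G)$ that collapses it to $\mathcal{P}_{q'}(G)$. Concretely, I would set $x_i = 0$ for every $i$ with $q' < i \le q$, leaving the remaining variables $x_1,\ldots,x_{q'}$ and $y_{i,j}$ with $i,j \le q'$ untouched. The claim I want to verify is that after this substitution the resulting polynomial in $2q' + \binom{q'}{2}$ variables is exactly $\mathcal{P}_{q'}(G)$.

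To check this, I would go back to Definition \ref{Id2} and split $\textrm{Hom}(G,W_q)$ into two disjoint pieces: those homomorphisms $\phi$ whose image is contained in $\{1,\ldots,q'\}$, and those whose image meets $\{q'+1,\ldots,q\}$. For the second kind, there exists some vertex $v \in V(G)$ with $\phi(v) = i$ for some $i > q'$, and hence the monomial $w(\phi) = \prod_{u \in V(G)} x_{\phi(u)} \prod_{e\in E(G)} w(e)$ contains a factor of $x_i$; after the substitution $x_i = 0$ it vanishes. For the first kind, $\phi$ is literally an element of $\textrm{Hom}(G,W_{q'})$, and its weight only involves the surviving variables, so it contributes to the sum unchanged. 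Summing over the first kind gives exactly $\mathcal{P}_{q'}(G)$.

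The only thing left to confirm is that the indexing conventions match, i.e.\ that $W_{q'}$ as used in the definition of $\mathcal{P}_{q'}(G)$ is obtained from $W_q$ by deleting vertices $q'+1,\ldots,q$ (keeping the weights $x_1,\ldots,x_{q'}$ and $y_{i,j}$ for $i,j \le q'$). This is immediate from Definition \ref{Id2}. An iterated application also recovers any $q'' < q'$, but a single substitution already handles the general case.

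There is no real obstacle here: the argument is just a specialisation of variables. The only small care needed is to note that every $\phi$ uses each vertex of $G$ somewhere, so $w(\phi)$ always picks up at least one $x_{\phi(v)}$; if $G$ has no vertices the statement is trivial. One could equivalently also set the corresponding $y_{i,j}$ with $i > q'$ or $j > q'$ to zero, but as observed above this is redundant since such monomials already carry a killed $x_i$.
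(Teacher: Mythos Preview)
Your proposal is correct and follows essentially the same approach as the paper's proof. The paper argues that $\mathcal{P}_{q-1}(G)$ consists of those monomials of $\mathcal{P}_q(G)$ in which $x_q$ does not appear (equivalently, the result of setting $x_q=0$) and then iterates; you carry out the same substitution for all $x_i$ with $i>q'$ at once and justify it directly from Definition~\ref{Id2}, which is the same idea in a single step.
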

\begin{proof}
	$\mathcal{P}_{q-1}(G)$ is the polynomial given by those monomials in $\mathcal{P}_q(G)$ with 0 for $x_q$, and the result follows inductively for smaller $q'$.
\end{proof}

\section{Graph properties, counting problems, and other graph polynomials}
Many of the  graph polynomials already studied in both  mathematics and physics are special cases of the homomorphism polynomial, for specific values of $q$ and the weight-variables.
\begin{theorem}\label{th3}
	The following polynomials are determined by $\mathcal{P}_q(G)$ for all $q\geq 2$.
	\begin{enumerate}
		\item The Ising polynomial (The partition function for the Ising model without external field).
		\item The Bivariate Ising polynomial  $Z(G,x,y)$ (The partition function for the Ising model with external field).
		\item The Matching polynomial (The partition function for dimer coverings).
		\item The van der Waerden polynomial.
		\item The Independence  polynomial.
		\item  The hard-core lattice gas partition function. 
	\end{enumerate}
\end{theorem}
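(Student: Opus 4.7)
The plan is to prove each of the six items by exhibiting an explicit specialisation of the variables of $\mathcal{P}_q(G)$ which recovers the corresponding polynomial. By Lemma \ref{mon} it is enough to work with $q=2$, so the argument takes place entirely inside $\mathcal{P}_2(G)$. First I would observe that since $W_2$ is a complete graph with loops, every map $\phi:V(G)\to\{1,2\}$ is automatically a homomorphism, and each such $\phi$ is naturally identified with a bipartition $V(G)=A_1\sqcup A_2$ where $A_i=\phi^{-1}(i)$. This sets the scene: each of the six polynomials is a generating function over such bipartitions (equivalently, over Ising states, or over subsets of $V(G)$), and the task reduces to writing down the correct vertex and edge weights in each case.

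For the bivariate Ising polynomial I would set $x_1=y$, $x_2=y^{-1}$, $y_{1,1}=y_{2,2}=x$, $y_{1,2}=x^{-1}$, identifying spin $+1$ with colour $1$ and spin $-1$ with colour $2$. Under this substitution the vertex contribution of a bipartition is $y^{|A_1|-|A_2|}=y^{M(\sigma)}$ and each edge $uv$ contributes $x^{\sigma(u)\sigma(v)}$, giving total weight $x^{E(\sigma)}y^{M(\sigma)}$ per state, i.e.\ the polynomial $Z(G,x,y)$; an overall factor $x^{m}y^{n}$ may be cleared first if one prefers to remain in the polynomial ring rather than pass through Laurent polynomials. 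The univariate Ising polynomial then arises from $Z(G,x,1)$ together with the rational substitution described in \cite{AM}. Items (3) and (4) follow from (2), since \cite{AM} shows that the van der Waerden polynomial and the bivariate Ising polynomial determine each other, and the matching polynomial is an explicit restriction of the van der Waerden polynomial as already noted in the excerpt.

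For the independence polynomial I would take $x_1=x$, $x_2=1$, $y_{1,1}=0$, $y_{1,2}=y_{2,2}=1$: the factor $0^{e(G[A_1])}$ kills any bipartition whose $A_1$ class is not independent, leaving exactly $\sum_{A_1\text{ independent}} x^{|A_1|}=I(G,x)$. For the hard-core lattice gas I would take $x_1=x$, $x_2=1$, $y_{1,1}=h$, $y_{1,2}=y_{2,2}=1$, so that each bipartition contributes $x^{|A_1|}h^{\|G[A_1]\|}$ and the sum over bipartitions is $H(G,h,x)$.

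The main obstacle is essentially bookkeeping rather than any deep argument: one must verify that the substitutions produce the precise polynomial as defined, keeping track of signs, normalisations, and Laurent versus polynomial expressions in the bivariate Ising case, and for the single-variable Ising polynomial one has to invoke the rational substitution of \cite{AM} to translate between the exponential and the $p$-parametrisations. Everything else is immediate from Definition \ref{Id2} once homomorphisms $G\to W_2$ are identified with bipartitions of $V(G)$.
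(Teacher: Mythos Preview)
Your proof is correct and follows essentially the same route as the paper's: reduce to $q=2$ via Lemma~\ref{mon}, read off the bivariate Ising polynomial directly from $\mathcal{P}_2(G)$, appeal to \cite{AM} for items 1, 3, 4, and obtain the hard-core partition function (and hence the independence polynomial) by the specialisation $x_2=y_{1,2}=y_{2,2}=1$. The only difference is presentational: the paper simply asserts that $\mathcal{P}_2(G)$ ``directly gives'' $Z(G,x,y)$ without writing down a substitution, whereas you supply the explicit Laurent specialisation and handle the normalisation, which is a welcome bit of extra care rather than a different argument.
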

\begin{proof}
	From Lemma \ref{mon} we know that we can determine $\mathcal{P}_2(G)$ from $\mathcal{P}_q(G)$ and $\mathcal{P}_2(G)$ directly gives us the bivariate Ising 
	polynomial $Z(G,x,y)$. In \cite{AM} it was proven that $Z(G,x,y)$ also determines 1,3 and 4.
	
	The independent set polynomial is determined by the hard-core partition function and it is in turn obtained from $\mathcal{P}_2(G)$ by setting $x_2=y_{22}=y_{12}=1$.
\end{proof}
Here we can mention a problem for further investigation. In \cite{AM} it is proven that the bivariate Ising polynomial and the van der Waerden polynomials are equivalent, 
via a pair of variable substitutions.
\begin{problem}
	Develop an analogue of the variable substitutions for the bivariate Ising and van der Waerdens polynomials for  $\P(G)$ for general $q$.
\end{problem}

From the fact that $\mathcal{P}_q(G)$ determines the bivariate Ising polynomial it follows that many of the basic properties of a graph are determined by $\mathcal{P}_q(G)$.
In \cite{AM} it was shown that the bivariate Ising polynomial, and hence $\P(G)$, determines the following properties of a graph.
\begin{theorem}[\cite{AM}]
Z(G,x,y) determines the following properties of a graph $G$.
  \begin{enumerate}
	\item The order $n$, size $m$, and the degree-sequence of $G$.
	\item The number of components of $G$ and their size.
	\item The smallest edge-connectivity of the components of $G$.
	\item The size of a maximal edge-cut in $G$, i.e. the largest value of $\left|[S,\bar{S}]\right|$.
	\item Whether $G$ is bipartite or not.
	\item The girth of $G$.	
	
	\item For an $r$-regular graph $G$, the number of $k$-cliques and the number of independent sets of  size $k$.
	\item Whether $G$ is a tree or not, and if $G$ is a tree its diameter and its characteristic polynomial.
	
    \end{enumerate}
\end{theorem}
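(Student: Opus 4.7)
The plan is to reinterpret $Z(G,x,y)$ as a joint generating function for two statistics on vertex subsets. Writing $S := \sigma^{-1}(+1)$, we have $M(\sigma) = 2|S| - n$ and $E(\sigma) = m - 2|\partial S|$, where $\partial S$ is the edge boundary $[S,\bar S]$. Hence the coefficient $a_{i,j}$ equals the number of subsets $S \subseteq V(G)$ with $|S| = (n+j)/2$ and $|\partial S| = (m-i)/2$. Equivalently, via the change of variables recalled from \cite{AM}, $Z$ encodes the van der Waerden polynomial $W(G,t,u)$, whose coefficient $b_{i,j}$ counts subgraphs with $i$ edges and $j$ vertices of odd degree. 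Almost every item will be extracted from one of these two equivalent data.

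For (1), $n$ is the largest $|j|$ that appears, $m$ is the largest $i$ that appears (attained by $S = \emptyset$ or $S = V$, where the boundary is empty), and the degree sequence is read off from the coefficients indexed by $|S| = 1$: the multiset of $|\partial S|$ values over singleton $S$ is exactly the degree sequence. For (2), multiplicativity $Z(G_1 \cup G_2) = Z(G_1) Z(G_2)$, together with the fact that the $Z$-polynomial of a connected graph admits no nontrivial factorisation compatible with the $M$-statistic, lets one factor $Z$ and read off the component sizes from the exponent ranges of the factors. Properties (3) and (4) --- smallest edge-connectivity of a component and the maximum edge-cut --- are then the smallest, respectively largest, nontrivial value of $|\partial S|$ realised within a fixed component. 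Bipartiteness (5) is the assertion that some subset achieves $|\partial S| = m$, equivalently that $a_{-m,j}$ is nonzero for some $j$. The girth (6) is most easily read from $W$: it is the smallest positive $i$ with $b_{i,0} > 0$, since a minimum Eulerian subgraph must be a cycle.

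For (8), $G$ is a tree iff it is connected (obtainable from (2)) and satisfies $m = n-1$; for a tree the diameter and characteristic polynomial are known to be determined by the matching polynomial, which by Theorem~\ref{th3} is determined by $Z$. The main obstacle is (7), the enumeration of $k$-cliques and $k$-independent sets in an $r$-regular $G$. The idea is to leverage regularity to convert boundary data into internal-edge data: $\sum_{v \in S} \deg_G(v) = r|S| = 2e(G[S]) + |\partial S|$, so $e(G[S]) = (r|S| - |\partial S|)/2$ is determined by $(|S|, |\partial S|)$. Consequently the joint distribution of $(|S|, e(G[S]))$ is extractable from $Z$; $k$-cliques are counted as the subsets with $|S| = k$ and $e(G[S]) = \binom{k}{2}$, and $k$-independent sets as those with $e(G[S]) = 0$. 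Regularity is essential to this inversion, which is precisely why the statement restricts to $r$-regular graphs.
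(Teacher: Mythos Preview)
The paper does not prove this theorem at all: it is quoted verbatim from \cite{AM} and simply attributed there, so there is nothing in the present paper to compare against. Your write-up is therefore not a reconstruction of the paper's argument but an independent sketch of why the result is plausible.

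On its own merits your sketch is mostly sound. The change of variables $M(\sigma)=2|S|-n$, $E(\sigma)=m-2|\partial S|$ is exactly the right lens, and items (1), (4), (5), (6), (7) follow cleanly from it as you describe. Two places are under-argued. For (2) you assert that the $Z$-polynomial of a connected graph ``admits no nontrivial factorisation compatible with the $M$-statistic''; this is not obvious and you give no justification. A safer route avoids factorisation entirely: the coefficient of $x^m$ in $Z$ (as a polynomial in $y$) records exactly those $S$ with $\partial S=\emptyset$, i.e.\ unions of components, and equals $y^{-n}\prod_k (1+y^{2n_k})$, from which the multiset of component sizes $\{n_k\}$ can be recovered by peeling off the smallest exponent repeatedly. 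For (8) you reduce the diameter of a tree to the matching polynomial; that step needs a reference or argument (it is true, via Godsil's identification of the matching and characteristic polynomials for trees together with the fact that for a tree the number of distinct eigenvalues equals one more than the length of a longest path, but it is not a triviality). Item (3) as phrased (``within a fixed component'') suggests you need per-component data, but in fact the minimum positive value of $|\partial S|$ over all $S$ already equals the smallest edge-connectivity among the components, so no component-by-component analysis is required.
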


The restriction to regular graphs in 7 is not necessary for $\P(G)$
\begin{theorem}
	$\P(G)$ determines the number of $k$-cliques and the number of independent sets of size $k$ in $G$.
\end{theorem}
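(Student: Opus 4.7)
The plan is to reduce both counts to coefficient extractions in the independence polynomial, exploiting the fact that $\mathcal{P}_q$ determines its own value on the complement graph (a property which fails for $Z(G,x,y)$, and which explains why the preceding theorem required regularity for the clique count).

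First I would handle the independent set count, which is essentially immediate. By Lemma~\ref{mon}, $\mathcal{P}_q(G)$ determines $\mathcal{P}_2(G)$, and by item~5 of Theorem~\ref{th3} this in turn determines the independence polynomial $I(G,x)$. The number of independent sets of size $k$ in $G$ is then just the coefficient of $x^k$ in $I(G,x)$.

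For the clique count I would pass to the complement. Theorem~\ref{th2} tells us that $\mathcal{P}_q(G)$ determines $\mathcal{P}_q(\overline{G})$, and repeating the argument above on $\overline{G}$ yields $I(\overline{G},x)$. Since a set $S\subseteq V(G)$ induces a $k$-clique in $G$ if and only if it is an independent set of size $k$ in $\overline{G}$, the coefficient of $x^k$ in $I(\overline{G},x)$ equals the number of $k$-cliques in $G$, and we are done.

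There is no substantive obstacle here; the proof is a short chaining of Theorem~\ref{th2}, Lemma~\ref{mon}, and Theorem~\ref{th3}. The only point worth flagging is where the improvement over the $Z(G,x,y)$-based result in the preceding theorem actually lives: it is the complement-invariance supplied by Theorem~\ref{th2} that permits us to drop the $r$-regular hypothesis, since without access to a polynomial of $\overline{G}$ one cannot symmetrically convert between the clique and independent-set counts.
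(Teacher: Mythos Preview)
Your proposal is correct and follows essentially the same route as the paper: use Theorem~\ref{th3} to obtain the independence polynomial for the independent-set count, and pass to $\overline{G}$ via Theorem~\ref{th2} for the clique count. The explicit invocation of Lemma~\ref{mon} is harmless but redundant, since Theorem~\ref{th3} already applies for all $q\geq 2$.
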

\begin{proof}
	By Theorem \ref{th3} the generating function for the number of independent sets of all sizes is determined by $\P(G)$, and the corresponding generating 
	function for cliques can be obtained from $\P(\overline{G})$ by Theorem \ref{th2}.
\end{proof}

In a series of papers \cite{SS1,SS2,SS3,SS4,SS5} Scott and Sorkin studied a very general class of constraint satisfaction problems, CSPs, for graphs. The focus of those papers   is algorithmic, and primarily on binary CSPs, finding fast algorithms for various versions of the CSPs on both sparse random graphs and graphs of bounded tree-width. In the series, they introduced a polynomial version of the CSPs where formal variables were assigned as weights to the edges and vertices of the graph and to the constraints as well.  While their emphasis was on situations with general weights on the graph and not on the constraint we note that the homomorphism polynomial can be obtained as a special case of their  polynomially weighted CSP by having weight 1 on all edges of the graph and general weights  on the constraints. For a more detailed description see \cite{SS5}.

\section{Non-isomorphic graphs with the same homomorphism polynomial}
One question which has been studied in connection with several of the graph polynomials already mentioned is whether non-isomorphic graphs can have the same 
polynomial, so far the answer has been yes for all polynomials studied, and how finely the given polynomial partitions the set of all graphs into equivalence classes.  For the bivariate Ising polynomial this was done in \cite{AM}, where arbitrarily large sets of non-isomorphic graphs with the same bivariate Ising polynomial were constructed, and in \cite{GGN2} this was done for the Potts model partition function. This is a well studied question for the chromatic polynomial, see e.g. \cite{Tu}.

For $2\leq q \leq 4$ we have carried out a complete classification of the graphs on $n\leq 10$ vertices according to their $\P(G)$, by a direct computer search.   We give more details about the computer search in Appendix \ref{comp} and present the results here. The smallest non-isomorphic graphs with the same $\mathcal{P}_2(G)$  have 8 vertices, see Figure \ref{fig:iso2} for an example. In Table \ref{tab:iso2} we show the number of non-trivial equivalence classes for each $n$.

\begin{figure}
    \includegraphics[width=0.5\textwidth]{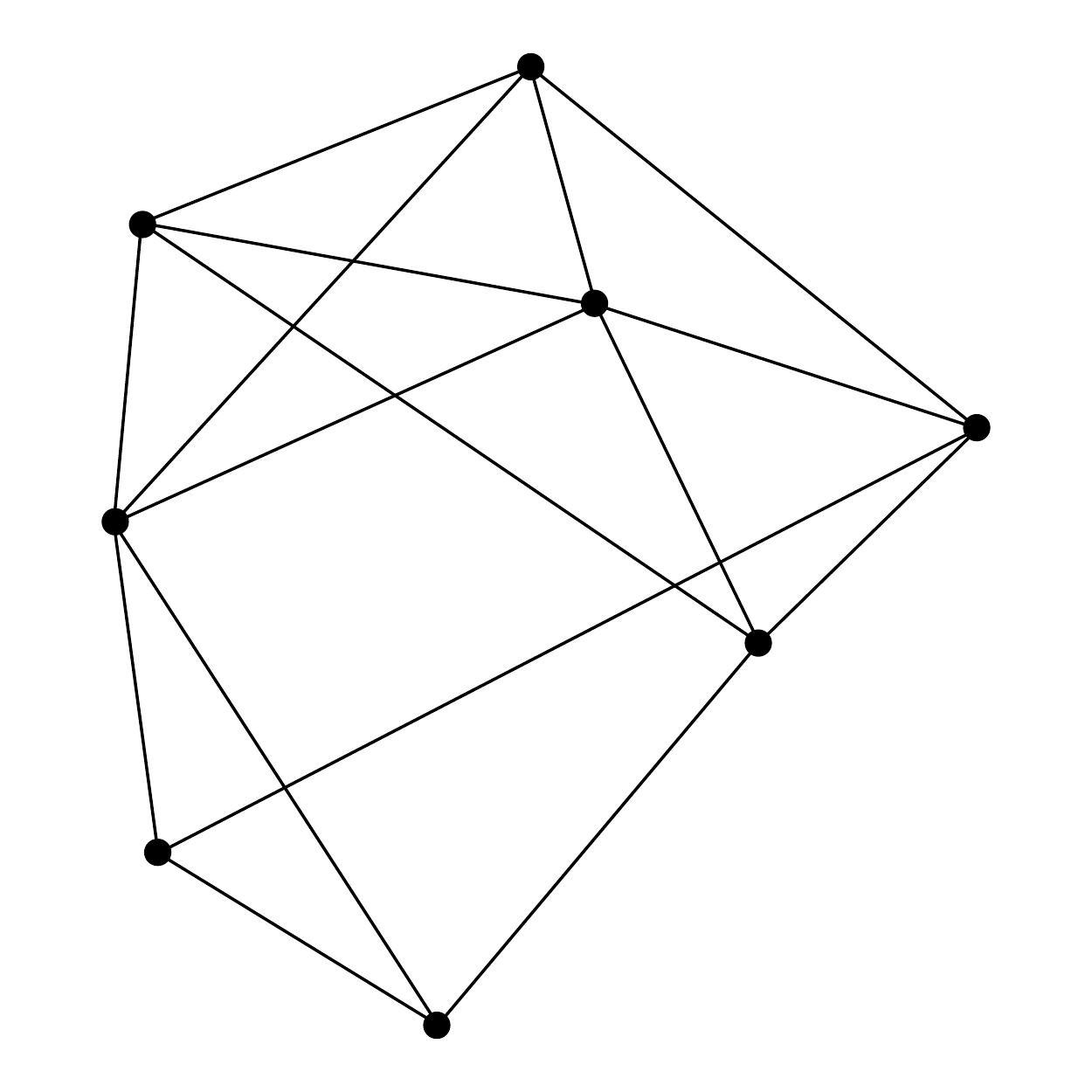}
    \includegraphics[width=0.5\textwidth]{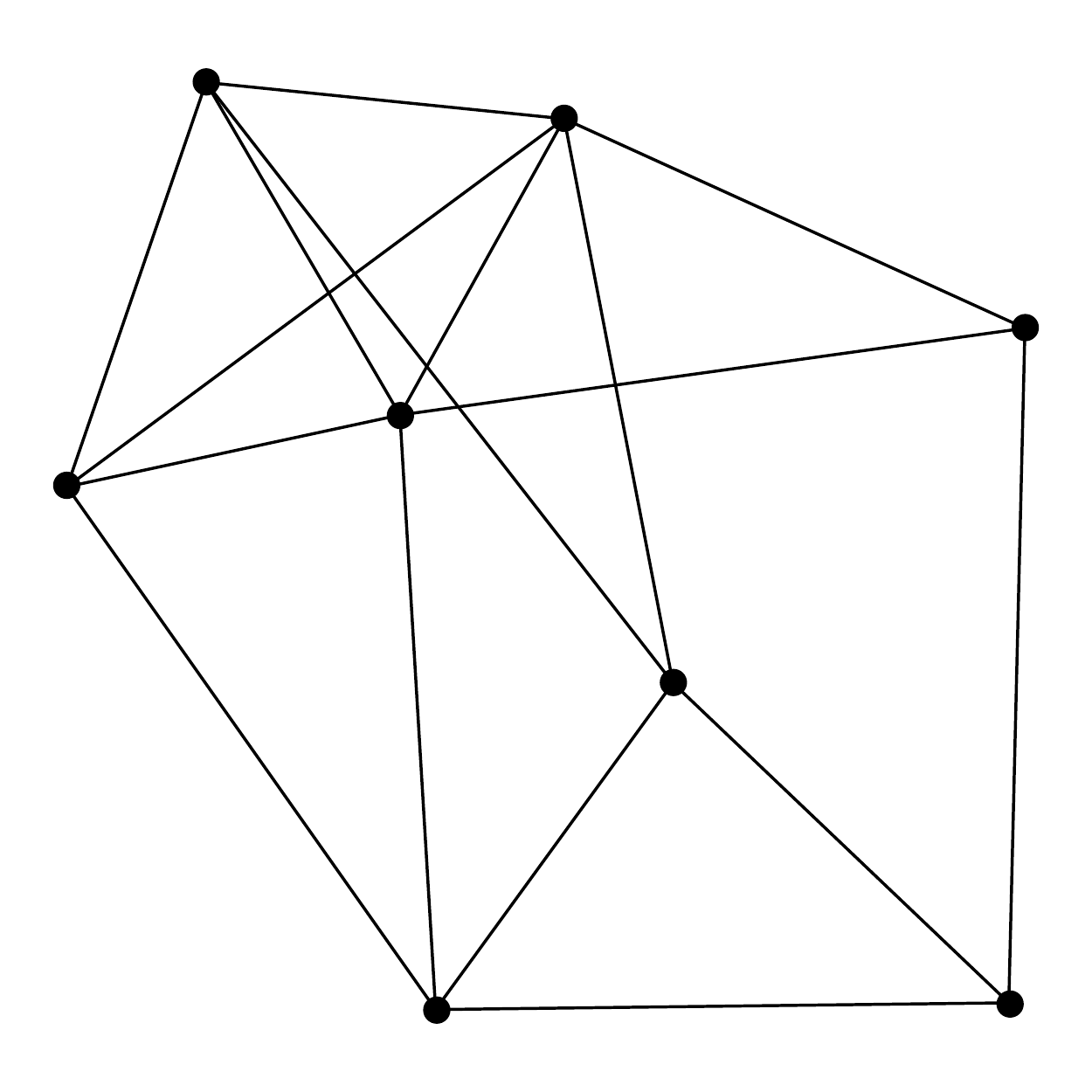}
    \caption{Two 8-vertex graphs with $\mathcal{P}_2(G_1)=\mathcal{P}_2(G_2)$.}\label{fig:iso2}
\end{figure}

\begin{table}[h]
\begin{center}
    \begin{tabular}{|r|r|r|r|}
	\hline
	$n$ & $q=2$ & $q=3$ & $q=4$  \\
	\hline
	1-7 &  0       &  0  & 0 \\
	8 &    29       &  0 & 0 \\
	9 &   2200   &  0 & 0 \\
       10 &  29270   & 2  & 0 \\

	\hline
    \end{tabular}
    \caption{The number of non-trivial equivalence classes for $q=2,3,4$. }\label{tab:iso2}
    \end{center}
\end{table}

The smallest graphs with the same $\mathcal{P}_3(G)$ have 10 vertices, and the non-trivial equivalence classes are two pairs of graphs, one of which is displayed in Figure \ref{fig:iso3a} and the other is the complements of the graphs in the displayed pair.
\begin{figure}
    \includegraphics[width=0.5\textwidth]{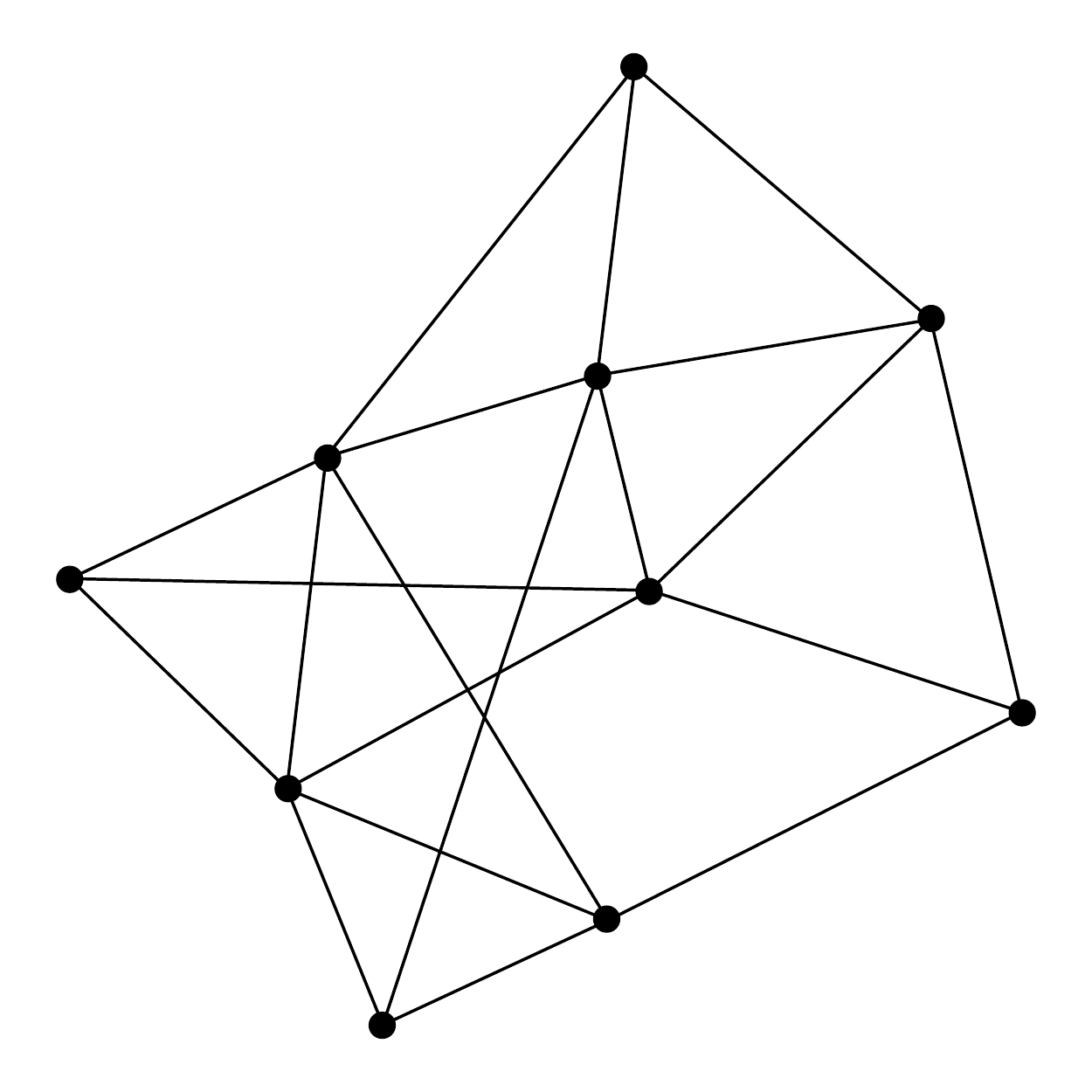}
    \includegraphics[width=0.5\textwidth]{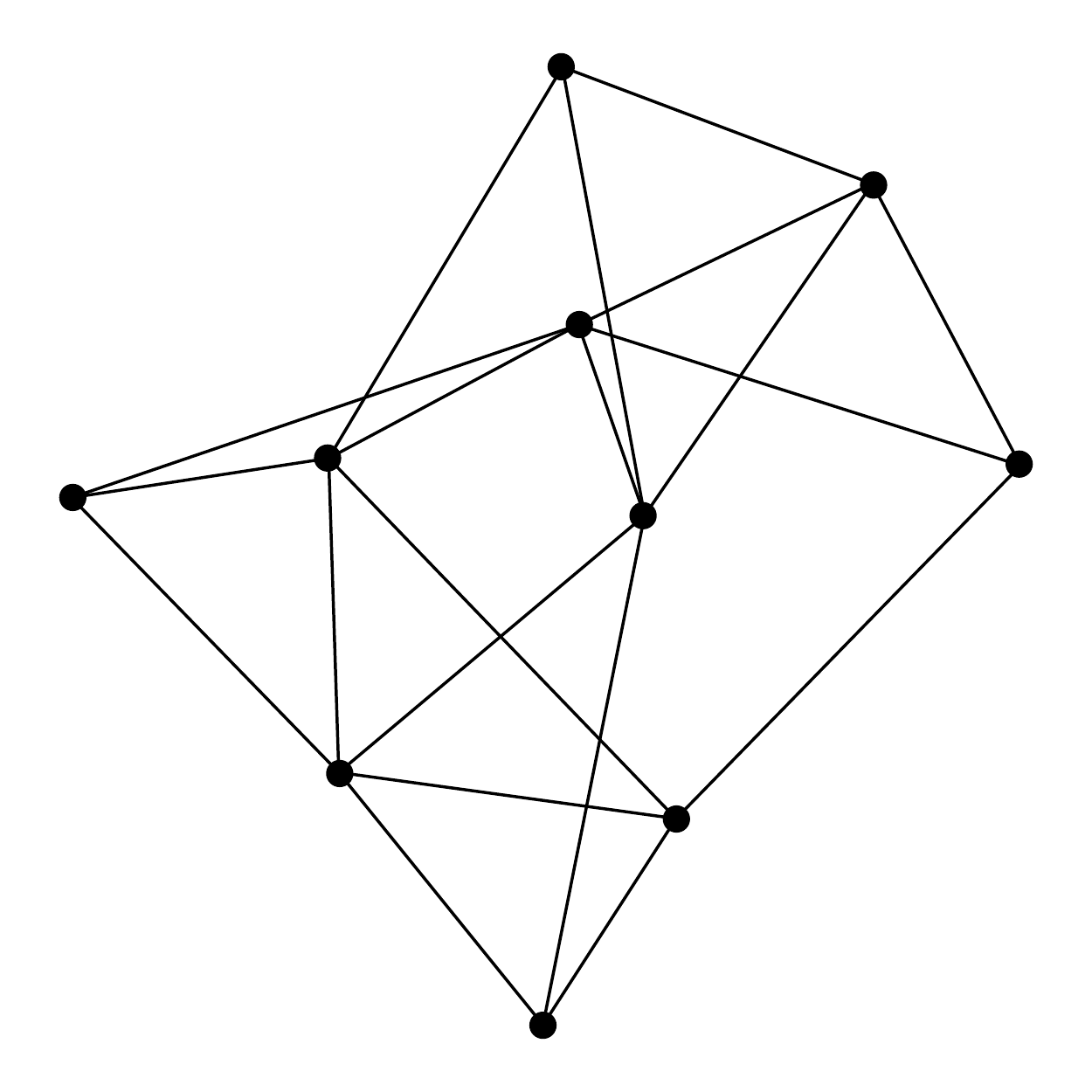}

    \caption{Two 10-vertex graphs with $\mathcal{P}_3(G_1)=\mathcal{P}_3(G_2)$.}\label{fig:iso3a}
\end{figure}

As Lemma \ref{mon} shows the property of having $\P(G)$ distinct from all other graphs is monotone in $q$, i.e., if $\P(G_1)=\P(G_2)$ then the homomorphism polynomial 
for lower values of $q$ will also be equal. If some of the variables in $\P(G)$ are given either numerical values or given equal weights this type of monotonicity  is not necessarily preserved. In \cite{GGN2} the authors asked for examples of graphs with the same $q$-state Potts model partition function for $q=3$ but different ones for $	q=2$. We used the same computer programs as for our earlier classification to look for such examples as well. This can be done easily since the Potts model partition function is a specialization of $\P(G)$.  The smallest examples of this type have 8 vertices. In Figure \ref{fig:potts} we display two 8-vertex graphs with the same $3$-state Potts model partition function and distinct $2$-state Potts model partition functions.

\begin{figure}
    \includegraphics[width=0.5\textwidth]{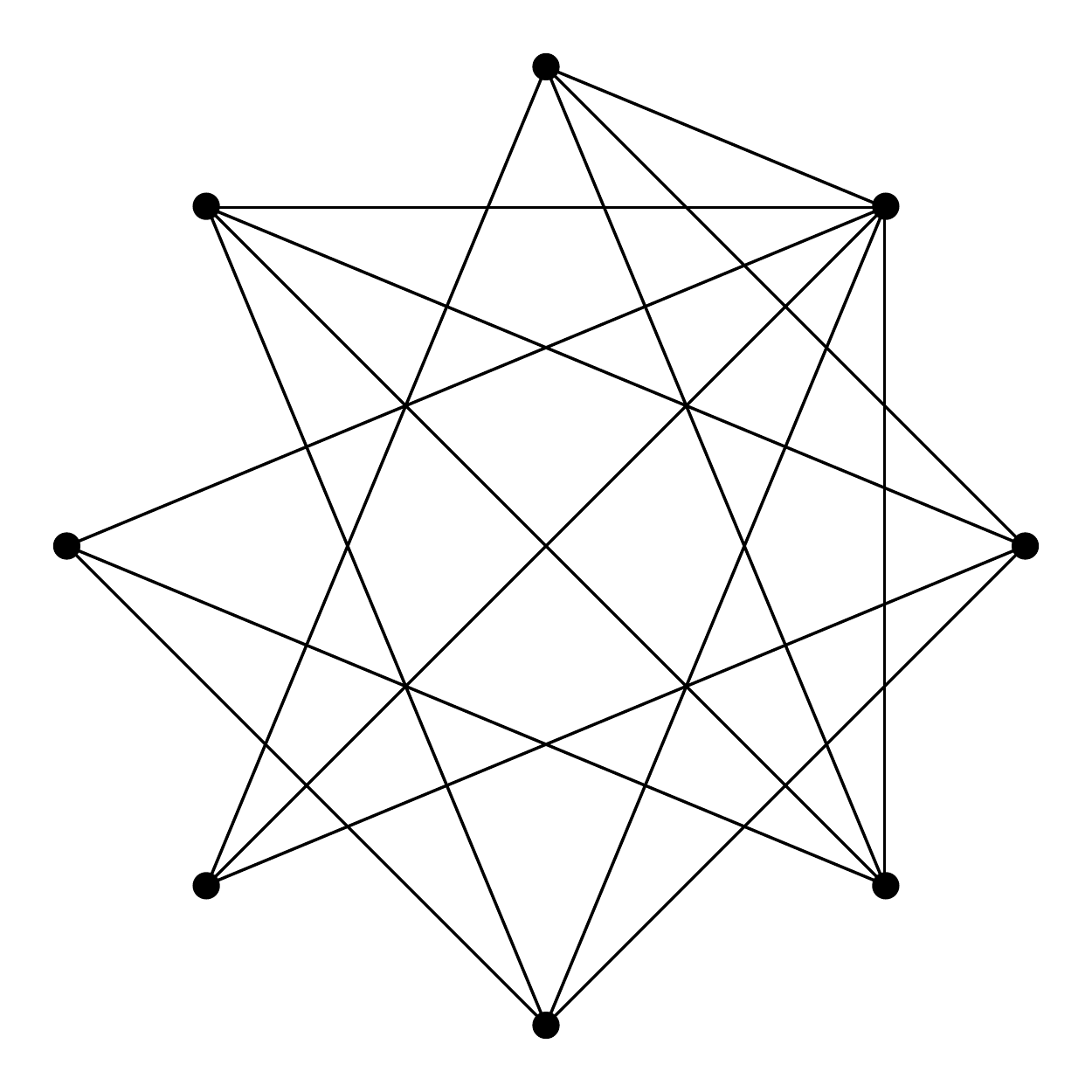}
    \includegraphics[width=0.5\textwidth]{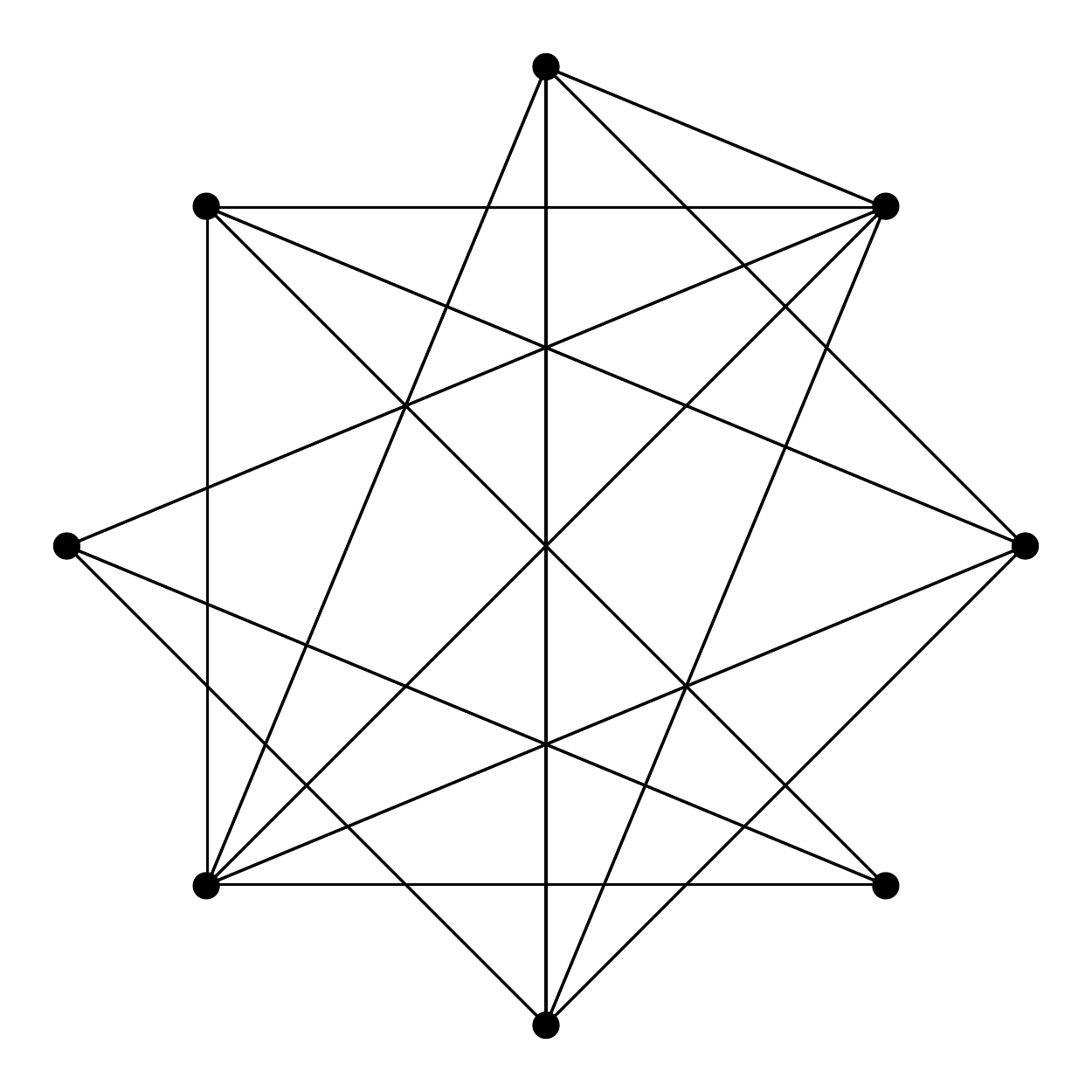}
    \caption{Two 8-vertex graphs with the same $q$-state Potts model partition function for $q=3$ but distinct ones for $q=2$.}\label{fig:potts}
\end{figure}

\subsection{Families of graphs with the same homomorphism polynomial}
Given the relatively large number of small graphs with the same homomorphism polynomial for $q=2$ and small such number for $q=3$ it is natural to ask if there is some $q$ which does determine all graphs and otherwise if the size of the smallest examples are strictly increasing in $q$. 

\begin{lemma}
	If $G_1$ and $G_2$ are graphs on $q$ vertices and $\P(G_1)=\P(G_2)$ then $G_1$ and $G_2$ are isomorphic.
\end{lemma}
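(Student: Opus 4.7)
The plan is to read off the isomorphism type of $G$ directly from the coefficient of a particular monomial in $\mathcal{P}_q(G)$, namely the coefficient of $x_1 x_2 \cdots x_q$ (with each $x_i$ appearing to the first power), viewed as a polynomial in the $y_{ij}$ variables.

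First I observe that since $G$ has exactly $q$ vertices, the homomorphisms $\phi: V(G) \to V(W_q)$ whose vertex weight contributes to the monomial $x_1 x_2 \cdots x_q$ are exactly the bijections $V(G) \to [q]$ (any non-bijective $\phi$ either omits some $x_i$ or produces $x_i^k$ for $k \geq 2$). For such a bijection $\phi$, the vertex part contributes $\prod_{i=1}^q x_i$ and the edge part contributes
$$\prod_{\{u,v\} \in E(G)} y_{\phi(u),\phi(v)} = \prod_{\{i,j\} \in E(\phi(G))} y_{ij},$$
where $\phi(G)$ denotes the labeled graph on vertex set $[q]$ obtained by relabeling $G$ via $\phi$. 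The second equality uses that $\phi$ is a bijection and $G$ is simple, so the pairs $\{\phi(u),\phi(v)\}$ are distinct and loop-free.

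Next I group the bijections by the labeled graph they produce. For any fixed labeled graph $H$ on $[q]$ that is isomorphic to $G$, exactly $|\mathrm{Aut}(G)|$ bijections $\phi$ satisfy $\phi(G) = H$. Hence the coefficient of $x_1 x_2 \cdots x_q$ in $\mathcal{P}_q(G)$ equals
$$|\mathrm{Aut}(G)| \cdot \sum_{\substack{H \subseteq K_q \\ H \cong G}} \prod_{\{i,j\} \in E(H)} y_{ij}.$$
The crucial point is that the monomial $\prod_{\{i,j\} \in E(H)} y_{ij}$ uniquely determines $H$ as a labeled graph on $[q]$, since the $y_{ij}$ exponents simply record which edges are present.

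Finally, suppose $\mathcal{P}_q(G_1) = \mathcal{P}_q(G_2)$. Extracting the coefficient of $x_1 x_2 \cdots x_q$ on both sides and comparing which $y$-monomials appear with nonzero coefficient, we conclude that the set of labeled graphs on $[q]$ isomorphic to $G_1$ coincides with the set of labeled graphs on $[q]$ isomorphic to $G_2$. Picking any $H$ in this common set yields $G_1 \cong H \cong G_2$. There is no real obstacle here; the argument is essentially a direct unpacking of the definition once one targets the right monomial, and the only minor care needed is the bookkeeping at the step where one notes that each labeled graph contributes a distinct $y$-monomial.
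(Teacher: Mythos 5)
Your proof is correct and takes essentially the same approach as the paper: both isolate the monomials whose vertex part is $x_1x_2\cdots x_q$, which arise exactly from bijective homomorphisms (i.e.\ labelled copies of the graph inside $W_q$), and conclude that equality of the two polynomials forces a common labelled copy, hence an isomorphism. Your write-up just makes explicit the bookkeeping the paper leaves implicit, namely the $|\mathrm{Aut}(G)|$ multiplicity and the fact that each labelled copy contributes a distinct squarefree $y$-monomial.
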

\begin{proof}
	If $\P(G_1)=\P(G_2)$ then there exists a monomial which corresponds to an isomorphism from $G_1$ to a labelled copy of $G_1$  in the weight graph $W_q$. From the identity it follows that there also exists a homomorphism from $G_2$ to the same labelled copy of $G_1$, and since it is a surjection of the vertices it is an isomorphism.    
\end{proof}

From this it follows that the rotor-type construction used in \cite{Tu} for the Tutte-polynomial and in \cite{AM} for the bivariate Ising polynomial cannot be used in order to show that for every $q$ there are graphs with the same homomorphism polynomial.  However we strongly believe that this is the case.
\begin{conjecture}
	For any fixed $q$ there exist non-isomorphic graphs $G_1$ and $G_2$ such that $\P(G_1)=\P(G_2)$.
\end{conjecture}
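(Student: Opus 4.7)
The plan is to adapt the local gadget-substitution paradigm used for the Tutte and bivariate Ising polynomials. First I would restate the conjecture combinatorially: $\P(G_1)=\P(G_2)$ is equivalent to the statement that for every ``colouring type'' $\bigl((a_1,\ldots,a_q),(b_{ij})_{1\le i\le j\le q}\bigr)$, the number of maps $\phi:V(G_\ell)\to[q]$ with $|\phi^{-1}(i)|=a_i$ and $|\{uv\in E(G_\ell):\{\phi(u),\phi(v)\}=\{i,j\}\}|=b_{ij}$ is the same for $\ell=1$ and $\ell=2$. So I need two non-isomorphic graphs whose $q$-colouring type-distributions coincide exactly.

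My construction would proceed by local substitution. Fix a small boundary set $B$ and try to produce two non-isomorphic gadgets $R_1,R_2$ sharing $B$ such that, for every labelling $\psi:B\to[q]$, the boundary-fixed partial generating polynomial
\[
f_i(\psi)=\sum_{\substack{\phi:V(R_i)\to[q]\\ \phi|_B=\psi}}\ \prod_{v\in V(R_i)\setminus B} x_{\phi(v)}\prod_{uv\in E(R_i)} y_{\phi(u)\phi(v)}
\]
is the same for $i=1,2$. Given such a pair, I would glue $R_1$ and $R_2$ along $B$ into a common rigid host graph $H$, producing $G_1$ and $G_2$. Summing the host contribution against $f_i(\psi)$ over $\psi\in[q]^B$ gives $\P(G_1)=\P(G_2)$, while rigidity of $H$ together with $R_1\not\cong R_2$ yields $G_1\not\cong G_2$.

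The hard part will be constructing such boundary-indistinguishable gadgets for every $q$. The preceding lemma rules out the simplest cyclic-rotor gadgets, because any cyclic symmetry permuting the vertices of $B$ is broken once $q\ge|B|$ labels are assigned freely there. What is needed instead is a ``hidden'' equivalence that survives on the larger space $[q]^B$ of boundary labellings. One natural candidate is a Gassmann--Sunada type construction: exhibit two non-conjugate subgroups of a finite group with equivalent permutation representations on their cosets, and realise the gadgets as (decorated) Cayley-type graphs so that the representation-theoretic coincidence forces $f_1(\psi)=f_2(\psi)$ for every $\psi$. A less conceptual fallback is to bootstrap from the small pairs at $q=2,3$ found in Section 5, attaching auxiliary structure whose contribution to every colouring type cancels, though the numbers in Table \ref{tab:iso2} suggest such extensions should quickly become rare as $q$ grows. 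Verifying all $q^{|B|}$ identities simultaneously, while keeping $R_1,R_2$ non-isomorphic, is the genuine obstruction.
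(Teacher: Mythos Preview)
The statement you are attempting to prove is stated in the paper as a \emph{conjecture}, not a theorem: the paper offers no proof and explicitly presents it as open. Immediately before the conjecture the paper observes, as a consequence of the preceding lemma, that ``the rotor-type construction used in \cite{Tu} for the Tutte-polynomial and in \cite{AM} for the bivariate Ising polynomial cannot be used in order to show that for every $q$ there are graphs with the same homomorphism polynomial.'' So there is no paper proof to compare against.

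Your proposal is precisely the local gadget-substitution (rotor) paradigm that the paper has just excluded, and you yourself acknowledge this: you note that the lemma kills the simplest cyclic rotors once $q\ge|B|$, and then list two possible escapes (a Gassmann--Sunada style construction, or bootstrapping the small computer-found pairs) without carrying either one out. In its current form this is a sketch of where one might look, not a proof: the decisive step---actually exhibiting, for every $q$, a pair of non-isomorphic gadgets $R_1,R_2$ with identical boundary-fixed polynomials $f_1(\psi)=f_2(\psi)$ for all $\psi\in[q]^B$---is exactly the obstruction the paper identifies and you explicitly leave unresolved. Note also that any fixed-size gadget pair will eventually be separated by $\P$ once $q$ reaches the number of vertices in the gadget, so a construction valid for all $q$ must let the gadget size grow with $q$; neither of your two candidate routes addresses how to do this while maintaining the required $q^{|B|}$ polynomial identities.
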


\begin{problem}
	Let $f(q)$ be the smallest integer such that there exist non-isomorphic graphs $G_1$ and $G_2$ on $f(q)$ vertices such that $\P(G_1)=\P(G_2)$.  Determine the rate of growth of $f(q)$.
\end{problem}
The preceding lemma demonstrates that $f(q)>q$ and our earlier examples show that $f(2)=8$, $f(3)=10$, and $f(4)>f(3)$. It follows from Lemma \ref{mon} that $f(q)$ is monotone but we do not know if it is strictly monotone.
\begin{conjecture}
	$f(q)<f(q+1)$. 
\end{conjecture}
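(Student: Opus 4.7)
The plan is to argue by contradiction. Suppose $f(q+1)=f(q)=n_0$; then there exist non-isomorphic graphs $G_1,G_2$ on $n_0$ vertices with $\mathcal{P}_{q+1}(G_1)=\mathcal{P}_{q+1}(G_2)$, and by Lemma \ref{mon} also $\mathcal{P}_q(G_1)=\mathcal{P}_q(G_2)$. The goal is to extract from $\mathcal{P}_{q+1}$ a graph invariant not determined by $\mathcal{P}_q$ that must already discriminate at this critical size.

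The structural identity I would rely on is the decomposition obtained by grouping each $\phi \in \textrm{Hom}(G,W_{q+1})$ by its preimage $S=\phi^{-1}(q+1)$:
$$\mathcal{P}_{q+1}(G)=\sum_{S\subseteq V(G)} x_{q+1}^{|S|}\, y_{q+1,q+1}^{|E(G[S])|}\, \mathcal{P}_q^{\partial S}(G-S),$$
where $\mathcal{P}_q^{\partial S}(G-S)$ is the homomorphism polynomial of $G-S$ augmented with marker weights $y_{i,q+1}$ recording, for each vertex $v\in V-S$ coloured $i$, the number of neighbours of $v$ lying in $S$. Extracting the coefficient of $x_{q+1}^{s}y_{q+1,q+1}^{t}$ gives an aggregated statistic $D_{s,t}(G)$ over subsets of size $s$ inducing $t$ edges; the case $s=0$ recovers $\mathcal{P}_q(G)$, while $s\geq 1$ produces genuinely new invariants. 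In particular, $D_{n-2,\,\cdot}$ expands into a generating polynomial over vertex pairs refined by the triple $(\deg u,\deg v,[uv\in E])$, which is strictly finer than the degree sequence delivered by $\mathcal{P}_q$. I would first verify on the known minimal witnesses (the 8-vertex $\mathcal{P}_2$-pairs and the 10-vertex $\mathcal{P}_3$-pairs) that $D_{n-2,\cdot}$ already separates them, and then attempt to push this to a general statement by induction on the deletion size $s$, since the $D_{s,t}$ are themselves built from smaller boundary-weighted subgraph statistics of $G-S$ that an inductive peeling could exploit.

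The main obstacle will be the inductive step. There is no formal reason why the aggregated census $D_{s,t}$ must distinguish \emph{every} non-isomorphic $\mathcal{P}_q$-equivalent pair, since the sums over $S$ collapse information about individual subsets; this is essentially a reconstruction-type difficulty. A possible route around it is to exploit the $S_{q+1}$-symmetry in the weight variables: view $\mathcal{P}_{q+1}$ as an element of the invariant ring under the action permuting $\{1,\ldots,q+1\}$, and show that the algebra generated by its coefficients strictly refines the $S_q$-invariant algebra generated by the coefficients of $\mathcal{P}_q$ in a way that must manifest at size $n=f(q)$. Alternatively, one could try a clever specialisation of the weight variables that isolates the contribution of a single subset $S$, converting the conjecture into a comparison of rooted or pointed invariants that might be attacked inductively on $n$ via vertex deletion. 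Either route will need genuinely new input beyond Lemma \ref{mon}, which is why the conjecture is plausibly still open.
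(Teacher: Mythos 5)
There is a genuine gap, and you have in fact identified it yourself: the paper does not prove this statement at all. It is posed as a conjecture, immediately after the remark that Lemma \ref{mon} gives weak monotonicity of $f$ but that strict monotonicity is unknown. So the honest benchmark is that no proof exists in the paper, and your proposal does not close that gap either. Your decomposition of $\mathcal{P}_{q+1}(G)$ by the fibre $S=\phi^{-1}(q+1)$ is a correct and natural identity, and the derived invariants $D_{s,t}$ (e.g.\ the pair census refined by $(\deg u,\deg v,[uv\in E])$ for $s=n-2$) are genuinely finer than what $\mathcal{P}_q$ obviously yields. But the argument you would need is that for \emph{every} non-isomorphic pair $G_1,G_2$ on exactly $f(q)$ vertices with $\mathcal{P}_q(G_1)=\mathcal{P}_q(G_2)$, some coefficient of $\mathcal{P}_{q+1}$ separates them; this amounts to showing $\mathcal{P}_{q+1}$ is a complete invariant on the critical class at size $f(q)$, which is far stronger than anything established in the paper (the only completeness result there is the elementary lemma for graphs on at most $q$ vertices, whence $f(q)>q$). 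The aggregation over all $s$-subsets in $D_{s,t}$ is exactly the reconstruction-type obstruction you name, and neither the proposed induction on the deletion size $s$ nor the symmetric-invariant-ring reformulation supplies a mechanism to overcome it; checking the known $8$-vertex ($q=2$) and $10$-vertex ($q=3$) witnesses is empirical evidence, not a proof, and it is already subsumed by the paper's computational data showing $f(3)>f(2)$ and $f(4)>f(3)$.

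One further caution: the quantity $f(q+1)$ is only known to be well defined if the paper's first conjecture (existence of $\mathcal{P}_{q+1}$-equivalent non-isomorphic pairs) holds, so any proof of strict monotonicity must either be conditional on that, or adopt the convention $f(q+1)=\infty$ when no such pair exists; your contradiction setup implicitly assumes existence and should say so. As it stands, your text is a reasonable research programme with a correctly diagnosed missing step, not a proof of the conjecture.
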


\section{Colourings, the Tutte-polynomial and  its generalizations.}

For $k\leq q$ we can determine the number of proper vertex $k$-colourings of $G$ from $\P(G)$, and we can count the number of proper colourings with colour classes of given sizes as well.  However,  if $q<n-1$ then $\P(G)$ does not always determine the chromatic polynomial of $G$, and hence not the Tutte-polynomial either.  In Figure \ref{fig:chro} we display two graphs with the same homomorphism polynomial for $q=2$ but distinct chromatic polynomials. For the $q$-state Potts partitions function this can be strengthened even further, in Figure \ref{fig:chro2} we display two graphs with the same  $q$-state Potts partitions function for $q=2,3$ but with distinct chromatic polynomials.
   If $q\geq n-1$ then the chromatic polynomial is determined by  $\P(G)$, since it determines the number of $k$-colourings of $G$ for $1 \leq k \leq n-1$, and the chromatic polynomial is a monic polynomial of degree $n$.
\begin{figure}
    \includegraphics[width=0.5\textwidth]{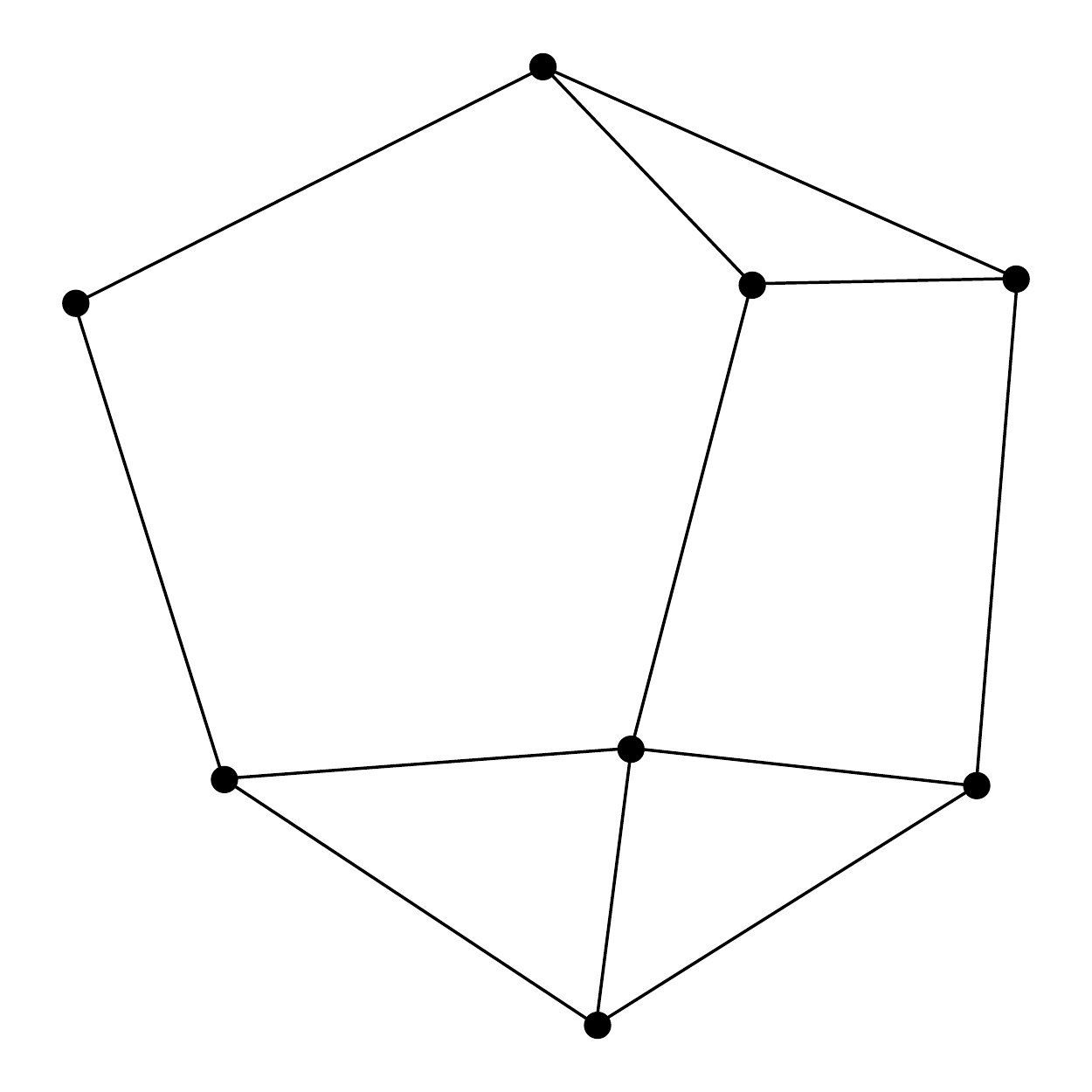}
   \includegraphics[width=0.5\textwidth]{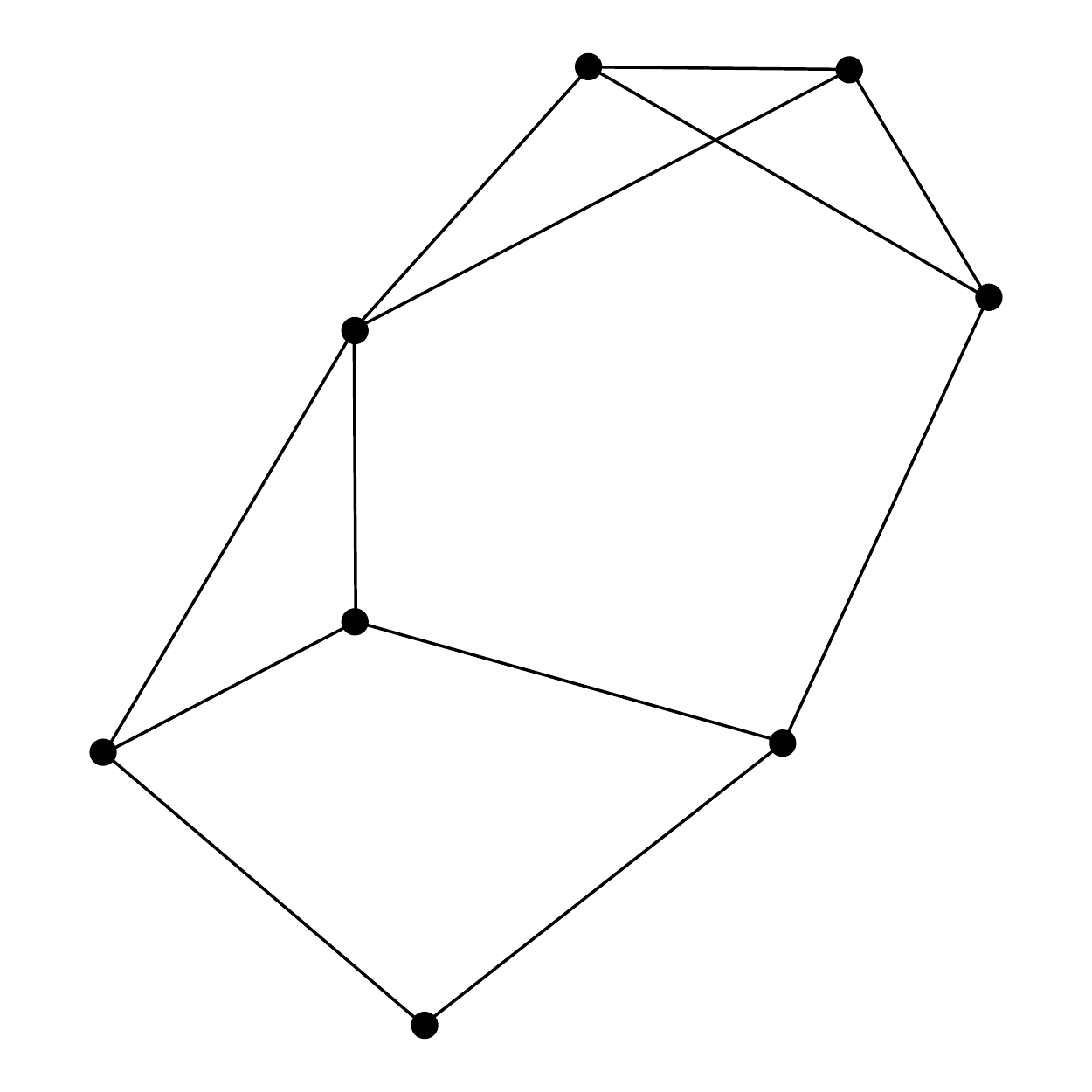}
    \caption{Two graphs $G_1$ and $G_2$ with $\mathcal{P}_2(G_1)=\mathcal{P}_2(G_2)$ and distinct chromatic polynomials.}\label{fig:chro}
\end{figure}

\begin{figure}
    \includegraphics[width=0.5\textwidth]{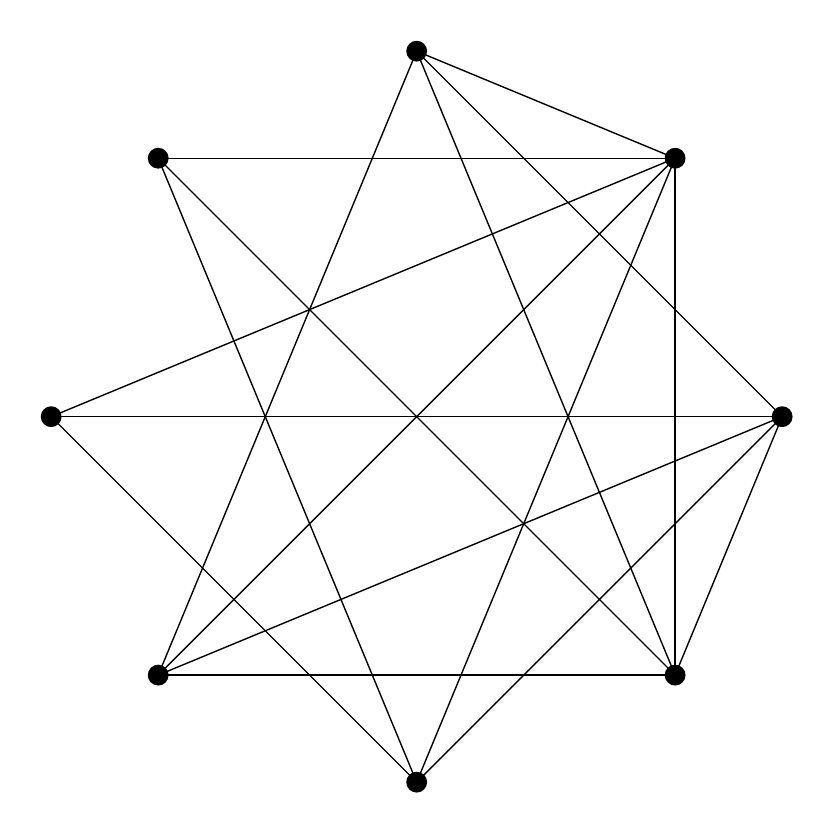}
   \includegraphics[width=0.5\textwidth]{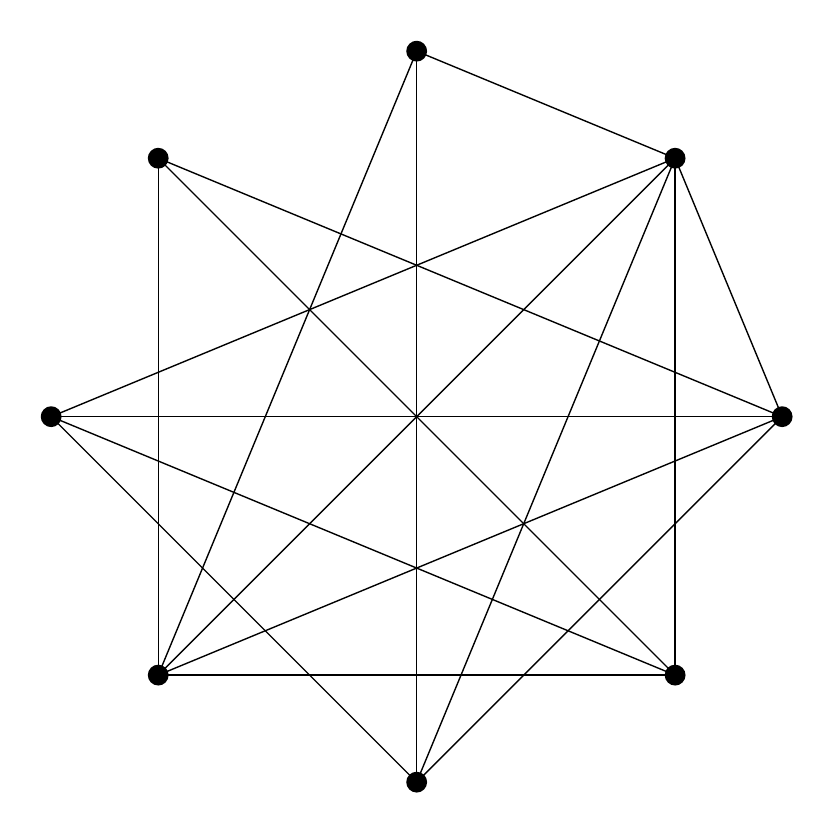}
    \caption{Two graphs $G_1$ and $G_2$ with the same  $q$-state Potts partitions function for  $q=2,3$ and distinct chromatic polynomials.}\label{fig:chro2}
\end{figure}

Another polynomial which is contained in the  Tutte-polynomial is the partition function of the $q$-state Potts model, here denoted by $P(G,q,y)$. This  polynomial is obtained from $\P(G)$ by setting $x_{i}=1$ for all $i$, $y_{ij}=1$ when $i<j$ and $x_{ii}=y$. 
\begin{lemma}
	The $q$-state Potts partition function is determined by $\P(G)$.
\end{lemma}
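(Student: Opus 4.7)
The plan is to carry out the prescribed substitution directly on the defining sum $\P(G) = \sum_{\phi \in \textrm{Hom}(G,W_q)} w(\phi)$ and recognise the result as the spin-configuration form of the $q$-state Potts partition function; the lemma is essentially a reinterpretation of the classical Fortuin--Kasteleyn equivalence.

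First I would observe that because $W_q$ carries a loop at every vertex (loops $y_{ii}$ are explicitly allowed by Definition \ref{Id2}), every map $\phi : V(G) \to \{1,\dots,q\}$ is a homomorphism from $G$ to $W_q$, so the sum defining $\P(G)$ already ranges over all $q^{|V(G)|}$ vertex labellings. Plugging in $x_i = 1$ for all $i$, $y_{ij} = 1$ for $i \ne j$, and $y_{ii} = y$ into
$$w(\phi) = \prod_{v \in V(G)} x_{\phi(v)} \prod_{uv \in E(G)} y_{\phi(u),\phi(v)}$$
collapses every vertex factor to $1$ and every edge factor to $y$ or $1$ according as $uv$ is or is not monochromatic under $\phi$. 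Hence
$$\P(G)\big|_{\text{sub}} = \sum_{\phi : V(G) \to \{1,\dots,q\}} y^{M(\phi)},$$
where $M(\phi) := |\{uv \in E(G) : \phi(u) = \phi(v)\}|$ counts the monochromatic edges.

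Finally I would identify this sum with $P(G,q,y)$. Expanding $y^{\delta_{\phi(u),\phi(v)}} = 1 + (y-1)\delta_{\phi(u),\phi(v)}$ in each edge factor and interchanging the order of summation produces $\sum_{A \subseteq E(G)} (y-1)^{|A|} q^{k(A)}$, which is $T(G,p,q)/(1-p)^m$ at $p = (y-1)/y$, i.e.\ the $q$-state Potts partition function in the parametrisation used in the paper. The only non-routine part of the argument is this final bookkeeping check matching the substitution variable $y$ to the Potts parameter via a standard change of variables; I expect no genuine obstacle, and the proof reduces to essentially one short computation.
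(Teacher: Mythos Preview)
Your proposal is correct and follows the paper's approach: the paper simply records (in the sentence preceding the lemma) the substitution $x_i=1$, $y_{ij}=1$ for $i<j$, $y_{ii}=y$ and states the lemma with no further proof. Your explicit verification that the substitution yields the spin-configuration sum $\sum_\phi y^{M(\phi)}$ and your Fortuin--Kasteleyn bookkeeping are more than the paper provides, but entirely in line with its (implicit) argument.
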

In \cite{GGN2} this polynomial was studied in detail and a number of graph families which can be determined from their Potts partition function for different values of $q$ were found. We refer to \cite{GGN2} for more details. 

In \cite{NW} Noble and Welsh defined the $U$-polynomial of a graph. This polynomial is a significant generalization of the Tutte-polynomial.  We will state one of the equivalent definitions given in \cite{NW}. First, given a set edges $A$ from a graph $G$, we let $G |A$ denote the subgraph obtained by deleting all edges not in $A$ from $G$, and we let $comp(G)$ denote the set of connected components of the graph $G$.
\begin{definition}
	$$U(G)=\sum_{A\subseteq E(G)} \left(y^{|A|-r(A)}\prod_{H\in comp(G|A)}x_{|H|}\right ),$$
	where $r(A)=|V(G)|-k(G|A)$, and $k(G|A)$ denotes the number of connected components in $G|A$.
\end{definition}
We note that if $G$ has $n$ vertices then $U(G)$ is a polynomial in $n$ variables, and each monomial  has total degree $n$ in the $x_i$'s. In \cite{NW} it was also proven that the $U$-polynomial is equivalent to the Tutte symmetric function introduced by Stanley in \cite{Stan}.  Tightly following this Sarmiento \cite{SI} proved that the $U$-polynomial, and hence the Tutte symmetric function as well, is equivalent to the so called polychromate, introduced by Brylawski \cite{Bry} in  1981.  Brylawski also proved that the polychromate of $G$ determines the polychromate of $\overline{G}$, and hence the same is true for the $U$-polynomial and the Tutte symmetric function.

One obvious difference between these three polynomials and the ones we have discussed earlier is that the number of variables is a function of the size of the graph. Hence, for a fixed $q$,  they are typically much larger objects than $\P(G)$.  Since the chromatic polynomial is a specialization of the $U$-polynomial and $\P(G)$ does not determine the chromatic polynomial we know that $P(G)$ cannot determine $U(G)$.  The reverse question is far from obvious, however it turns out that for large enough $q$ the two polynomials are not equvialent
\begin{observation}
	The two graphs in Figure \ref{fig:Upol} have the same $U$-polynomial but distinct homomorphism polynomials for $q=3$.
\end{observation}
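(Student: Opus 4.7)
The statement concerns two specific graphs $G_1$ and $G_2$ displayed in Figure \ref{fig:Upol}, so a proof reduces to a direct verification that $U(G_1)=U(G_2)$ while $\mathcal{P}_3(G_1)\neq\mathcal{P}_3(G_2)$. The plan is to evaluate both polynomials for both graphs by straight enumeration from the definitions and then compare them term by term.

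For the $U$-polynomial I would iterate over all $2^{|E(G_i)|}$ subsets $A\subseteq E(G_i)$. For each $A$ the spanning subgraph $G_i|A$ has a multiset of component orders, giving the monomial $\prod_{H\in comp(G_i|A)} x_{|H|}$, while the exponent of $y$ is the cyclomatic number $|A|-|V(G_i)|+k(G_i|A)$. Accumulating and grouping equal monomials produces $U(G_i)$. As a cross-check I would exploit the equivalence with the Tutte symmetric function and with Brylawski's polychromate established in \cite{NW, SI}: it suffices to verify that for every partition $\lambda$ of $|V(G_i)|$ and every nonnegative integer $c$, the two graphs have the same number of spanning subgraphs whose component order sequence is $\lambda$ and whose cyclomatic number is $c$.

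For $\mathcal{P}_3(G_i)$, since the weighted graph $W_3$ in Definition \ref{Id2} is complete with loops, every map $\phi:V(G_i)\to\{1,2,3\}$ is a homomorphism; thus I would enumerate the $3^{|V(G_i)|}$ maps and accumulate the monomial $\prod_{v\in V(G_i)} x_{\phi(v)}\prod_{uv\in E(G_i)} y_{\phi(u)\phi(v)}$ for each. To certify inequality it is enough to exhibit a single monomial on which the two coefficient lists disagree; in practice one narrows the search by first looking at monomials of low total degree in the $y$-variables, since these record easily interpreted statistics (e.g.\ how many colour-class-respecting edge patterns of a given shape the graph admits).

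The principal obstacle is purely computational bookkeeping: both expansions have many terms and by-hand verification is error-prone, so the natural format of the proof is a short machine calculation of the type already used in Section~5 for the classification experiments. The only genuinely conceptual step one might hope to add is a human-readable certificate isolating a combinatorial quantity of $G_i$ which is encoded in $\mathcal{P}_3$ but washed out in the passage to $U$ (for example, a refined count of ordered colour-class profiles not recoverable from the component-order/rank data that determine $U$). Such a certificate is not required for the observation as stated, but would explain \emph{why} the two invariants separate rather than merely showing \emph{that} they do.
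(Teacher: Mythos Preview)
Your proposal is correct and matches the paper's own treatment: the observation is established there by direct computer calculation (as part of the exhaustive search described in the Appendix), not by any structural argument, and your plan to enumerate all edge-subsets for $U$ and all $3$-colourings for $\mathcal{P}_3$ is exactly that computation. The paper offers no human-readable certificate of the kind you mention as optional, so your write-up is already at least as detailed as the original.
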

By computing the $U$-polynomial for all graphs on $n\leq 8$ vertices we found  that the smallest graphs with the same $U$-polynomial have 8 vertices. There are 8 non-trivial equivalence classes, all of which are pairs. The graphs in each pair also have the same  homomorphism polynomial for $q=2$.
\begin{figure}
    \includegraphics[width=0.5\textwidth]{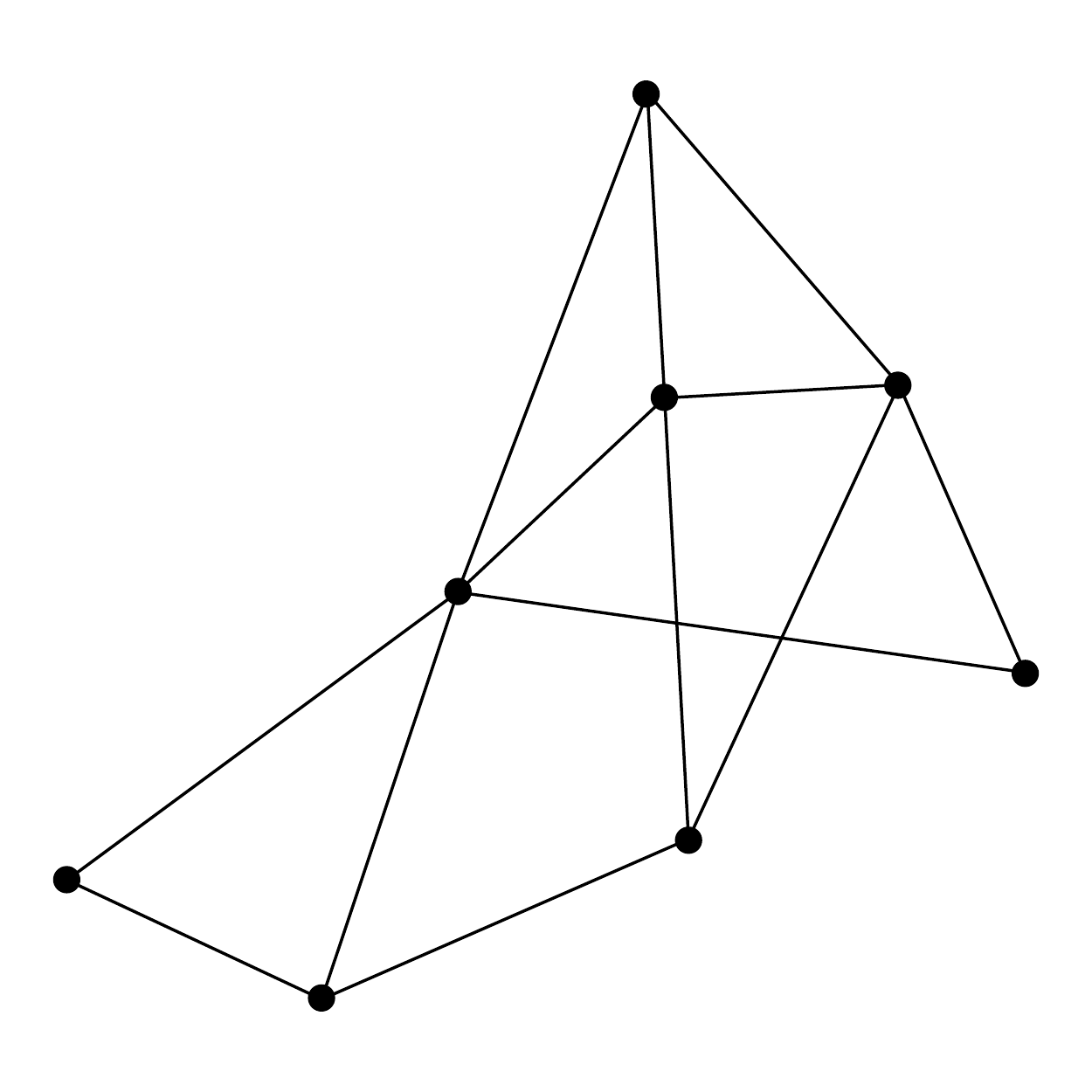}
   \includegraphics[width=0.5\textwidth]{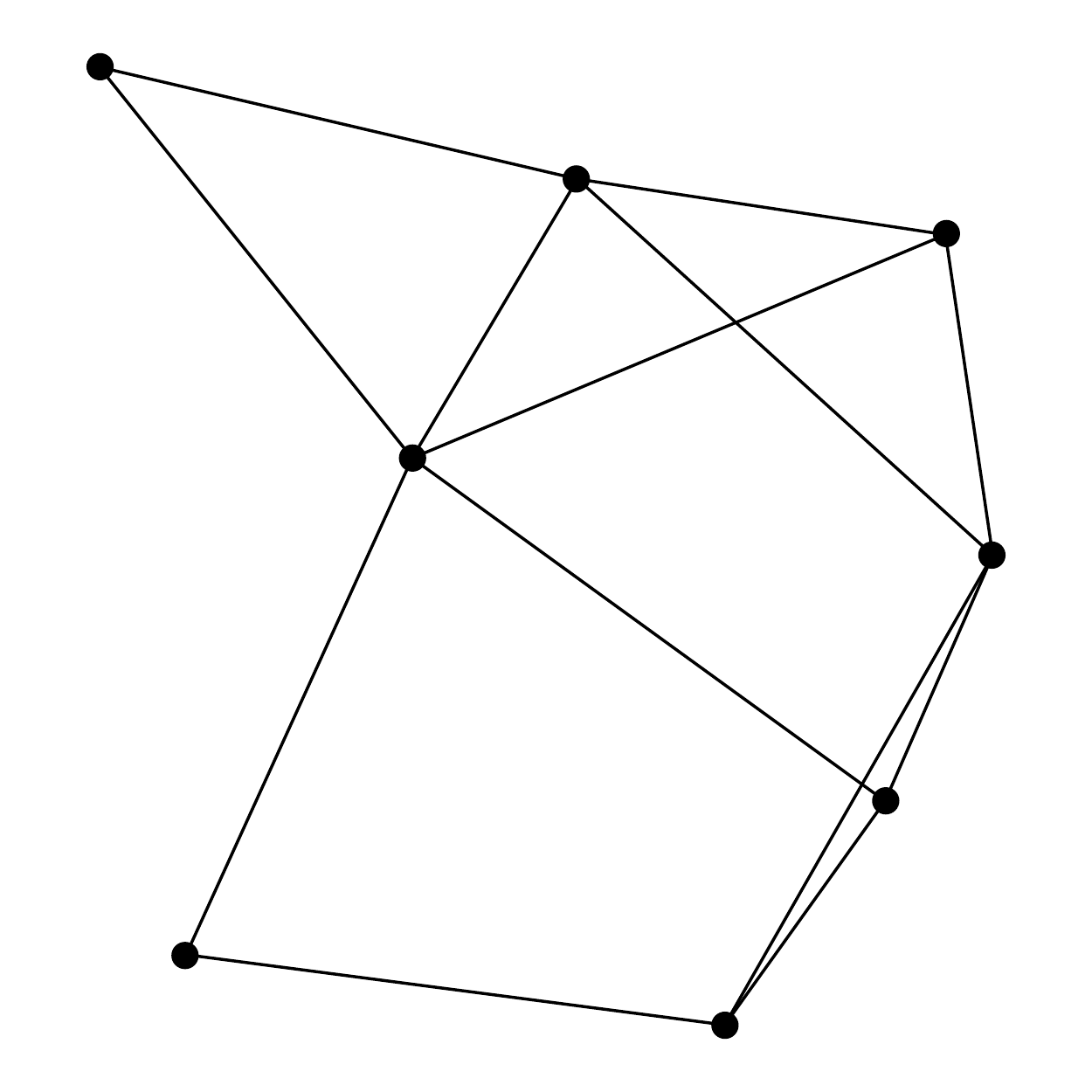} 
    \caption{Two 8-vertex graphs $G_1$ and $G_2$ with the same $U$-polynomial but  $\mathcal{P}_3(G_1)\neq \mathcal{P}_3(G_2)$.}\label{fig:Upol}
\end{figure}
Based on the last observation it is natural to ask the following question
\begin{problem}
	Does the $U$-polynomial determine $\mathcal{P}_2(G)$?
\end{problem}

In \cite{MN} the authors introduced what they called  strong versions of both the $U$-polynomial and the Tutte symmetric function and proved that both of these polynomials are equivalent to the strong polychromate, introduced by Bollob\'as and Riordan in \cite{BR:00}. They also showed that these polynomials determine Tutte's universal $V$-function \cite{Tu:47}, a very general polynomial with the property that if $G$ has several components then the $V$-function is the product of the $V$-functions of the components. 
If we let $SU(G)$ denote the strong $U$-polynomial of $G$ then, as defined in \cite{MN}, 
\begin{definition}
	$$SU(G)=\sum_{A\subseteq E(G)} \prod_{H\in comp(G|A)}x_{|H|,|E(H)-|H||+1}$$
\end{definition}
An  open problem from \cite{MN} is to find two graphs with the same $U$-polynomials but distinct strong $U$-polynomials. By extending our computation to the $U$-polynomial we found the following. 
\begin{observation}
	All pairs of graphs on $n\leq 10$ vertices which have the same $U$-polynomial also have the same strong $U$-polynomial.
\end{observation}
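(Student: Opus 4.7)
The plan is to verify this observation by exhaustive computer search, extending the pipeline already used in the preceding classification results. First, I would generate a complete list of pairwise non-isomorphic graphs on $n$ vertices for each $n \leq 10$, using a standard tool such as nauty's \texttt{geng}. For each graph $G$ I then need a canonical representation of both $U(G)$ and $SU(G)$; since both polynomials are symmetric in the relevant $x_i$'s (respectively $x_{i,j}$'s), the polynomials can be stored as sorted lists of monomial--coefficient pairs, so that equality testing reduces to comparison of these lists.

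For the actual computation of the polynomials, iterating naively over all $2^{|E(G)|}$ edge subsets is not feasible for dense 10-vertex graphs, where $|E(G)|$ can be as large as $45$. I would instead use a deletion--contraction style recursion (both $U$ and $SU$ are defined from component sizes of spanning subgraphs, and thus admit such a recursion with memoization on isomorphism types of small minors) or, alternatively, a direct enumeration over connected spanning subgraphs organized by their component partition. These approaches are standard in Tutte-type polynomial computations and bring the cost per graph to something manageable in the ranges considered.

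Once $U(G)$ and $SU(G)$ are computed for every graph, I would bucket graphs by their $U$-polynomial. For each bucket with more than one element, I would then compare the strong $U$-polynomials of the members. The observation is precisely the statement that, for $n \leq 10$, every such bucket is monochromatic with respect to $SU$. The paper already reports the $U$-polynomial buckets at $n = 8$ (eight pairs), so at $n = 8$ only sixteen additional $SU$ computations are required; the $n = 9$ and $n = 10$ cases will involve larger lists of collisions to check and will dominate the runtime.

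The main obstacle is purely computational rather than mathematical: the number of graphs on $10$ vertices is on the order of $10^7$, and computing $U$ and $SU$ for the denser members is the bottleneck. No conceptual difficulty arises, provided one is careful to (i) use a canonical, isomorphism-invariant encoding of the polynomial coefficients so that equality comparisons are reliable, and (ii) double-check the implementation against the small non-trivial $U$-polynomial equivalence classes at $n = 8$, for which the $SU$-polynomials can also be computed by hand from the definition.
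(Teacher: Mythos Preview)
Your proposal is correct and matches the paper's approach: the observation is stated as the outcome of an exhaustive computer search (``by extending our computation''), carried out via the same filtering pipeline described in the appendix, and no separate argument is given. Your write-up is in fact more detailed than the paper's own account, but the method is the same.
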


\section{Computational complexity and high-low symmetries for random graphs and tree-width}
It is well known that several of the polynomials from Theorem \ref{th3} are $\#P$-hard to compute and hence the homomorphism polynomial must be at least as hard to compute. The definition of $\P$ provides a straight forward algorithm for computing this polynomial in time $\mathcal{O}(poly(n)q^n)$ for an $n$-vertex graph $G$. However, as for many other computational problems, there are efficient algorithms when the input is restricted to graphs with bounded tree width.

Using the general dynamic programming methods described in \cite{reed} or \cite{SS5} it is straightforward to prove the following theorem for  graphs of bounded tree width 
\begin{theorem}
	For every fixed $q$ and $t$ there is an  algorithm with a running time which is polynomial in $n$ for computing $\P(G)$ for $n$-vertex graphs with tree width at most $t$. 
\end{theorem}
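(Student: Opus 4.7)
The plan is to carry out the standard tree-decomposition dynamic program, where the ``states'' at each bag are the assignments $\alpha: B \to [q]$ of bag vertices to the $q$ vertices of $W_q$, and the ``values'' are polynomials in the weight variables recording partial sums.

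First I would compute a tree decomposition of $G$ of width at most $t$ (using Bodlaender's algorithm, which is linear time for fixed $t$) and convert it into a nice tree decomposition with $O(n)$ nodes of five types: leaf, introduce-vertex, introduce-edge, forget, and join. For each node $X$ with bag $B(X)$, let $G_X$ denote the subgraph built from all vertices and edges introduced at descendants of $X$ (including $X$ itself). For each map $\alpha: B(X) \to [q]$ I store
$$T_X[\alpha] = \sum_{\substack{\phi \in \mathrm{Hom}(G_X, W_q) \\ \phi|_{B(X)} = \alpha}} w_X(\phi),$$
where $w_X(\phi)$ is the product of all edge weights $y_{\phi(u)\phi(v)}$ for edges already introduced in $G_X$ times the vertex weights $x_{\phi(v)}$ for all $v \in V(G_X) \setminus B(X)$; vertex factors for bag vertices are deferred until the vertex is forgotten, which avoids double counting.

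The updates are local and standard: at a leaf, $T_X[\alpha]=1$; at an introduce-vertex node for $v$, $T_X[\alpha] = T_{X'}[\alpha|_{B(X')}]$; at an introduce-edge node for $\{u,v\}$, multiply by $y_{\alpha(u)\alpha(v)}$; at a forget node for $v$, set $T_X[\alpha] = \sum_{c \in [q]} x_c \cdot T_{X'}[\alpha \cup (v \mapsto c)]$; at a join node, $T_X[\alpha] = T_{X_1}[\alpha]\cdot T_{X_2}[\alpha]$. The value $\P(G)$ is obtained at the root by summing $\prod_{v \in B(\text{root})} x_{\alpha(v)} \cdot T_{\text{root}}[\alpha]$ over all $\alpha$. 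Correctness is an immediate induction on the decomposition, using that edges of $G_X$ have both endpoints in $G_X$ and that the bag separates the vertices already forgotten from those still to come.

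For the running time, there are at most $q^{t+1}$ states per node and $O(n)$ nodes, so $O(n)$ DP steps in total for fixed $q,t$. Each step performs a bounded number of additions and multiplications on polynomials in the $2q+\binom{q}{2}$ weight variables. By the degree bound in Lemma~1.2, each stored polynomial has $x$-degree at most $n$ and $y$-degree at most $m \le \binom{n}{2}$ in each of a constant number of variables, hence at most $n^{O(q^2)}$ monomials, a polynomial in $n$. I expect the main technical point to verify is precisely this size bound on intermediate polynomials: the partial sums $T_X[\alpha]$ are restrictions of $\P(G_X)$ to fixed boundary behavior and so inherit the same degree bounds, which keeps each polynomial operation (and in particular multiplication at join nodes) polynomial-time. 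Combining, the overall running time is polynomial in $n$, as required.
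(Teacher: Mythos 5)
Your proof is correct and is exactly the ``general dynamic programming method'' the paper invokes: the paper gives no details beyond citing Reed and Scott--Sorkin, and your write-up supplies the standard nice-tree-decomposition DP with $q^{t+1}$ states per bag, including the one genuinely necessary extra check that the intermediate polynomials have only $n^{O(q^2)}$ monomials so each arithmetic step stays polynomial. Nothing further is needed.
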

The multiplicative constant  in the run-time bound is of the form $\mathcal{O}(q^t)$. Here theorem \ref{th2} has an interesting consequence.
\begin{corollary}
	If $\overline{G}$ has tree width $t$ then $\P(G)$ can be computed in polynomial time.
\end{corollary}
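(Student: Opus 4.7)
The plan is to reduce the computation of $\P(G)$ to that of $\P(\overline{G})$ and then invoke the preceding bounded-tree-width theorem. Since $\overline{G}$ has tree width $t$ and $q$ is fixed, that theorem will produce $\P(\overline{G})$ in time polynomial in $n$.

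Next I will apply Theorem \ref{th2} to $\overline{G}$; since $\overline{\overline{G}} = G$, the theorem asserts that $\P(\overline{G})$ determines $\P(G)$. The transformation I have in mind is the explicit one from the proof of Theorem \ref{th2}: for each monomial $\prod_i x_i^{n_i} \prod_{i \leq j} y_{ij}^{a_{ij}}$ of $\P(\overline{G})$ carrying coefficient $c$, I will emit a monomial with the same $x$-part and the same coefficient $c$, but with each $y_{ij}$-exponent replaced by the complementary value read off from the formula for $\P(K_n)$ in Lemma \ref{com}, namely $n_i n_j - a_{ij}$ for $i < j$ and $\binom{n_i}{2} - a_{ii}$ for $i = j$.

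Finally I need to check that this rewriting itself runs in polynomial time. For fixed $q$, $\P(\overline{G})$ is a polynomial in the constant number $2q + \binom{q}{2}$ of variables, with each monomial having total $x$-degree at most $n$ and total $y$-degree at most $\binom{n}{2}$; hence $\P(\overline{G})$ has at most polynomially many monomials in $n$ and the complementation pass is pure bookkeeping. The only nontrivial step is the initial bounded tree-width computation; everything after it is immediate, so this is where the real work (and the only potential obstacle) resides.
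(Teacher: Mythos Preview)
Your argument is correct and is exactly the approach the paper intends: compute $\P(\overline{G})$ via the bounded-tree-width algorithm and then recover $\P(G)$ from it using Theorem~\ref{th2}. Your additional care in spelling out the monomial-by-monomial complementation and bounding the number of monomials polynomially (for fixed $q$) is more detail than the paper gives, but entirely in line with its reasoning.
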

We also note that the transfer matrix methods from \cite{LM08} can be used to compute $\P$ efficiently for so called poly-graphs, and also make it possible to use the automorphism group of $G$ to speed up the computation.

The behaviour of algorithms for computing NP-hard, and some polynomial, graphs properties on random graphs has been an active area of research in the last decade, with the main emphasis on the $k$-SAT problem.  In the series of papers by Scott and Sorkin mentioned earlier it was shown that for random graphs from the $G(n,p)$  model there is a threshold at $p=\frac{1}{n}$, such that for smaller $p$ there are algorithms with polynomial expected running time for several 2-CSP which in general are NP-hard, and for larger $p$ the same algorithms have an exponential expected running time.  

In \cite{LLO} it was proven that the tree-width of a random graph has a threshold 
\begin{theorem}
	Let $G$ be a graph from $G(n,p)$.
\begin{enumerate}
	\item  If $p=\frac{c}{n}$, with $c<1$ then $tw(G)\leq 2$ with probability $1-o(1)$. 
	\item If $p=\frac{c}{n}$, with $c>1$ then there exist a function $f(c)$ such that $tw(G) \geq f(c) n$ with probability $1-o(1)$.
\end{enumerate}
\end{theorem}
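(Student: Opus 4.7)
The proof splits cleanly along the two cases, with Part 1 following from the classical subcritical structure of $G(n,p)$ and Part 2 requiring an expansion argument, which is where the real work lies.

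For Part 1, the plan is to invoke the standard Erd\H{o}s--R\'enyi fact that for $p=c/n$ with $c<1$, asymptotically almost surely every connected component of $G$ contains at most one cycle. This is verified by a first-moment calculation: the expected number of subgraphs on $k$ vertices with at least $k+1$ edges is of order $c^{k+1}/n$, and summed over $k\leq \log n$ this tends to $0$ (larger $k$ is ruled out by the fact that subcritical components have size $O(\log n)$ whp). A tree has tree-width $1$, and a unicyclic graph has tree-width $2$, as one sees by taking a path decomposition of the cycle and attaching the pendant trees to the appropriate bags. Since the tree-width of a graph equals the maximum of the tree-widths of its components, $tw(G)\leq 2$ whp follows.

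For Part 2 the plan is to exhibit inside the giant component a subgraph of linear order that is a vertex expander, and then invoke the standard fact that vertex expanders have linear tree-width. First I would use the Erd\H{o}s--R\'enyi giant component theorem: for $c>1$ the unique giant component and its $2$-core both have size $\Theta(n)$ whp, with explicit limit densities depending on $c$. I would then iteratively peel away vertices of degree less than some small constant $d$ (say $d=3$) to obtain a subgraph $G'$ of order $\Theta(n)$ with minimum degree $d$. A union-bound computation then shows that $G'$ is whp an $\alpha$-vertex-expander for an explicit $\alpha=\alpha(c)>0$: for every $S\subseteq V(G')$ with $|S|\leq |V(G')|/2$ one has $|N(S)\setminus S|\geq \alpha|S|$, because the probability that a specific set of size $s$ has fewer than $\alpha s$ outside neighbours is controlled by a Chernoff-type bound that, for the appropriate range of $s$, beats the $\binom{n}{s}\binom{n}{\alpha s}$ counting factor. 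The final step is the elementary observation that an $N$-vertex graph with constant vertex expansion has tree-width $\Omega(N)$, since every balanced separator must contain $\Omega(N)$ vertices; applying this to $G'$ yields $tw(G)\geq tw(G')\geq f(c)\,n$.

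The main obstacle is the expansion step. The raw supercritical $G(n,c/n)$ is not an expander at all; it has isolated vertices, pendant paths, and sparse tree-like branches, and the point of the peeling procedure is to remove these obstructions. One has to be careful about the regime of set sizes over which expansion is claimed, since small subsets (single pendant paths, small locally tree-like neighbourhoods) do not expand, and one has to tune $d$ and $\alpha$ as functions of $c$ so that the union bound closes uniformly. Extracting an explicit $f(c)>0$ then becomes routine, but all of the delicate work of the proof lives in this expansion lemma.
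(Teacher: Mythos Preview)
The paper does not prove this theorem at all: it is quoted verbatim as a result of Lee, Lee and Oum \cite{LLO}, with no accompanying argument. So there is no ``paper's own proof'' to compare your proposal against; the author is simply citing a known threshold result in order to discuss the running time of the tree-width based algorithm for $\mathcal{P}_q(G)$.

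That said, your outline has a real gap in Part~2. You propose to peel down to a subgraph of minimum degree $d$ (you suggest $d=3$) and then run an expansion/union-bound argument. But for $G(n,c/n)$ the $3$-core is empty with high probability whenever $1<c<c_3$, where $c_3\approx 3.35$ is the $3$-core emergence threshold; for such $c$ your peeling procedure erases the entire graph. Dropping to $d=2$ does not save the argument, since a graph of minimum degree~$2$ can be a long cycle and has no nontrivial vertex expansion. To cover the full range $c>1$ one typically works instead with the \emph{kernel} of the giant component (contract maximal paths of degree-$2$ vertices to single edges), which is a multigraph of minimum degree~$3$ and linear size for every $c>1$, and then argues expansion or balanced-separator lower bounds there; your write-up would need to incorporate this or an equivalent device before the union bound can close for $c$ near~$1$. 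Part~1 is fine as written.
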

Hence the tree-width based algorithm for computing  $\mathcal{P}_q(G)$ has a polynomial median running time for $p<\frac{1}{n}$.  In order to prove that the expected running time is  polynomial as well, a concentration result for the tree-width of random graph ( which is stronger than what a simple application of e.g. Azuma's inequality gives) is needed. In particular the following problem becomes interesting:
\begin{problem}
	Let $p=\frac{c}{n}$ for some $c<1$ and $G$ be a graph from $G(n,p)$.  For which $q\geq 1$ is $\mathbb{E}(q^{tw(G)})=\mathcal{O}(n^k)$.?	
\end{problem}

\section{Further directions: directed graphs and quantum models}
There is a natural generalisation $\overrightarrow{\mathcal{P}}(D,q)$ of $\mathcal{P}(G,q)$  for directed multigraphs $D$, defined by replacing the weighted graph $W_q$ in the definition of $\mathcal{P}(G,q)$ by a complete directed graph on $q$ vertices, with both an edge from  $i$ to $j$ and one from $j$ to $i$ for each $i$ and $j$, where the weight $x_{i,j}$ is now distinct from $x_{j,i}$.

Our discussion of  $\mathcal{P}(G,q)$ has been based on the natural connection between graph  homomorphisms and graph properties related to various form of partitions of the vertex set of a graph. However, it is well known, see \cite{HN} for an in-depth discussion, that the theory of  homomorphisms of directed graphs  is in many ways better behaved than that for ordinary graphs, and hence $\overrightarrow{\mathcal{P}}(D,q)$ would seem to be an natural object to study. 

There has been several approaches to defining an analogue of the Tutte polynomial for directed graphs as well.  In  \cite{ChGr} one such generalisation was given in the form of the cover polynomial of a digraph, a polynomial which has also inspired work on colouring of ordinary graphs \cite{St}. As we have already done for the Tutte polynomial and the homomorphism polynomial it would be interesting to investigate which information the cover polynomial and the directed homomorphism polynomial 
$\overrightarrow{\mathcal{P}}(D,q)$  share.

As we have already mentioned $\mathcal{P}_2(G)$ contains the partition function of the Ising model, with an external field, as studied in  \cite{AM}. In physics  there is also a quantum version of the Ising model where there is also a so called transversal field involved, and this model has a partition function in three variables, two of which are the ones in the bivariate Ising polynomial and one which is associated with the transversal field. However for this model the partition function is not a polynomial. Nonetheless it still forms an invariant for graphs and has been studied in connection with adiabatic quantum algorithms in  \cite{quant}. There it was found that as an isomorphism invariant the quantum partition function is very strong, and in fact no example of two non-isomorphic graphs with the same partition function is known.  Finding such a pair would of course be interesting, and  it would also be interesting to consider a quantum version of $\mathcal{P}_q(G)$  as well. We refer the reader to \cite{quant} for further discussion of quantum models.


\begin{thebibliography}{VMB{\etalchar{+}}14}

\bibitem[AM09]{AM}
Daniel Andr{\'e}n and Klas Markstr{\"o}m, \emph{The bivariate {I}sing
  polynomial of a graph}, Discrete Appl. Math. \textbf{157} (2009), no.~11,
  2515--2524. \MR{2522961 (2011a:05094)}

\bibitem[Bax89]{Bax}
Rodney~J. Baxter, \emph{Exactly solved models in statistical mechanics},
  Academic Press, Inc. [Harcourt Brace Jovanovich, Publishers], London, 1989,
  Reprint of the 1982 original. \MR{998375 (90b:82001)}

\bibitem[BR00]{BR:00}
B{\'e}la Bollob{\'a}s and Oliver Riordan, \emph{Polychromatic polynomials},
  Discrete Math. \textbf{219} (2000), no.~1-3, 1--7. \MR{1761705 (2001d:05054)}

\bibitem[Bry81]{Bry}
Tom Brylawski, \emph{Intersection theory for graphs}, J. Combin. Theory Ser. B
  \textbf{30} (1981), no.~2, 233--246. \MR{615318 (82g:05036)}

\bibitem[CG95]{ChGr}
F.~R.~K. Chung and R.~L. Graham, \emph{On the cover polynomial of a digraph},
  J. Combin. Theory Ser. B \textbf{65} (1995), no.~2, 273--290. \MR{1358990
  (96j:05050)}

\bibitem[GGN11]{GGN2}
Delia Garijo, Andrew Goodall, and Jaroslav Ne{\v{s}}et{\v{r}}il,
  \emph{Distinguishing graphs by their left and right homomorphism profiles},
  European J. Combin. \textbf{32} (2011), no.~7, 1025--1053. \MR{2825534
  (2012h:05215)}

\bibitem[God81]{God}
C.~D. Godsil, \emph{Hermite polynomials and a duality relation for matching
  polynomials}, Combinatorica \textbf{1} (1981), no.~3, 257--262. \MR{637830
  (83e:05015)}

\bibitem[Gri06]{Grim}
Geoffrey Grimmett, \emph{The random-cluster model}, Grundlehren der
  Mathematischen Wissenschaften [Fundamental Principles of Mathematical
  Sciences], vol. 333, Springer-Verlag, Berlin, 2006. \MR{2243761
  (2007m:60295)}

\bibitem[HN04]{HN}
Pavol Hell and Jaroslav Ne{\v{s}}et{\v{r}}il, \emph{Graphs and homomorphisms},
  Oxford Lecture Series in Mathematics and its Applications, vol.~28, Oxford
  University Press, Oxford, 2004. \MR{2089014 (2005k:05002)}

\bibitem[LLO12]{LLO}
Choongbum Lee, Joonkyung Lee, and Sang-il Oum, \emph{Rank-width of random
  graphs}, J. Graph Theory \textbf{70} (2012), no.~3, 339--347. \MR{2946080}

\bibitem[LM08]{LM08}
Per~HŒkan Lundow and Klas Markstršm, \emph{Exact and approximate compression of
  transfer matrices for graph homomorphisms}, LMS Journal of Computation and
  Mathematics \textbf{11} (2008), 1--14.

\bibitem[MN09]{MN}
Criel Merino and Steven~D. Noble, \emph{The equivalence of two graph
  polynomials and a symmetric function}, Combin. Probab. Comput. \textbf{18}
  (2009), no.~4, 601--615. \MR{2507739 (2010g:05174)}

\bibitem[NW99]{NW}
S.~D. Noble and D.~J.~A. Welsh, \emph{A weighted graph polynomial from
  chromatic invariants of knots}, Ann. Inst. Fourier (Grenoble) \textbf{49}
  (1999), no.~3, 1057--1087, Symposium {\`a} la M{\'e}moire de Fran{\c{c}}ois
  Jaeger (Grenoble, 1998). \MR{1703438 (2000h:05066)}

\bibitem[Ree03]{reed}
B.~A. Reed, \emph{Algorithmic aspects of tree width}, Recent advances in
  algorithms and combinatorics, CMS Books Math./Ouvrages Math. SMC, vol.~11,
  Springer, New York, 2003, pp.~85--107.

\bibitem[Sar00]{SI}
Irasema Sarmiento, \emph{The polychromate and a chord diagram polynomial}, Ann.
  Comb. \textbf{4} (2000), no.~2, 227--236. \MR{1770691 (2001h:05040)}

\bibitem[Sok05]{sok}
Alan~D. Sokal, \emph{The multivariate {T}utte polynomial (alias {P}otts model)
  for graphs and matroids}, Surveys in combinatorics 2005, London Math. Soc.
  Lecture Note Ser., vol. 327, Cambridge Univ. Press, Cambridge, 2005,
  pp.~173--226. \MR{2187739 (2006k:05052)}

\bibitem[SS03]{SS1}
Alexander~D. Scott and Gregory~B. Sorkin, \emph{Faster algorithms for {MAX}
  {CUT} and {MAX} {CSP}, with polynomial expected time for sparse instances},
  Approximation, randomization, and combinatorial optimization, Lecture Notes
  in Comput. Sci., vol. 2764, Springer, Berlin, 2003, pp.~382--395. \MR{2080807
  (2005c:68109)}

\bibitem[SS06a]{SS3}
\bysame, \emph{An {LP}-designed algorithm for constraint satisfaction},
  Algorithms---{ESA} 2006, Lecture Notes in Comput. Sci., vol. 4168, Springer,
  Berlin, 2006, pp.~588--599. \MR{2347176 (2008f:90056)}

\bibitem[SS06b]{SS2}
\bysame, \emph{Solving sparse random instances of {M}ax {C}ut and {M}ax 2-{CSP}
  in linear expected time}, Combin. Probab. Comput. \textbf{15} (2006),
  no.~1-2, 281--315. \MR{2195586 (2006m:05228)}

\bibitem[SS07]{SS4}
\bysame, \emph{Linear-programming design and analysis of fast algorithms for
  {M}ax 2-{CSP}}, Discrete Optim. \textbf{4} (2007), no.~3-4, 260--287.
  \MR{2374035 (2009b:68196)}

\bibitem[SS09]{SS5}
\bysame, \emph{Polynomial constraint satisfaction problems, graph bisection,
  and the {I}sing partition function}, ACM Trans. Algorithms \textbf{5} (2009),
  no.~4, Art. 45, 27. \MR{2571908 (2011c:68177)}

\bibitem[Sta98]{Stan}
Richard~P. Stanley, \emph{Graph colorings and related symmetric functions:
  ideas and applications: a description of results, interesting applications,
  \& notable open problems}, Discrete Math. \textbf{193} (1998), no.~1-3,
  267--286, Selected papers in honor of Adriano Garsia (Taormina, 1994).
  \MR{1661374 (2000c:05152)}

\bibitem[Ste01]{St}
Einar Steingr{\'{\i}}msson, \emph{The coloring ideal and coloring complex of a
  graph}, J. Algebraic Combin. \textbf{14} (2001), no.~1, 73--84. \MR{1856230
  (2002h:13032)}

\bibitem[Tut47]{Tu:47}
W.~T. Tutte, \emph{A ring in graph theory}, Proc. Cambridge Philos. Soc.
  \textbf{43} (1947), 26--40. \MR{0018406 (8,284k)}

\bibitem[Tut98]{Tu}
\bysame, \emph{Graph theory as {I} have known it}, The Clarendon Press Oxford
  University Press, New York, 1998, With a foreword by U. S. R. Murty.

\bibitem[vdW41]{VDW}
B.L. van~der Waerden, \emph{Die lange reichweite der regelmassigen
  atomanordnung in mischkristallen}, Zeitschrift fur Physik \textbf{118}
  (1941), 473--.

\bibitem[VMB{\etalchar{+}}14]{quant}
Walter Vinci, Klas Markstr{\"o}m, Sergio Boixo, Aidan Roy, Federico~M
  Spedalieri, Paul~A Warburton, and Simone Severini, \emph{Hearing the shape of
  the ising model with a programmable superconducting-flux annealer},
  Scientific reports \textbf{4} (2014).

\bibitem[Whi32]{Whi}
Hassler Whitney, \emph{A logical expansion in mathematics}, Bull. Amer. Math.
  Soc. \textbf{38} (1932), 572--579.

\end{thebibliography}

\newcommand{\etalchar}[1]{$^{#1}$}
\providecommand{\bysame}{\leavevmode\hbox to3em{\hrulefill}\thinspace}
\providecommand{\MR}{\relax\ifhmode\unskip\space\fi MR }
\providecommand{\MRhref}[2]{%
  \href{http://www.ams.org/mathscinet-getitem?mr=#1}{#2}
}
\providecommand{\href}[2]{#2}

\section*{Appendix: The computer search}\label{comp}

Computing the homomorphism polynomial $\P(G)$ for a general graph $G$ is $\#P$-hard, and in practice we have no efficient general algorithm for this problem, 
even for small fixed $q$. So, in order to reduce the computational burden we have performed our search in steps.

Let us assume that we have some graph invariant $F(G)$ and we wish to find all non-trivial equivalence classes of $n$-vertex graphs for $F$.  Let us also assume that 
we have some sequence of easier to compute graph invariants $f_1,f_2,\ldots, f_k$, all of which have that property that if  $F(G_1)=F(G_2)$ then $f_i(G_1)=f_i(G_2)$.  We now proceed as follows:

\begin{enumerate}
	\item Set $i=1$ and let our collection of equivalence classes $S$ consist of a single equivalence class consisting of all graphs on $n$ vertices.
	
	\item Partition each equivalence class in $S$ into equivalence classes for $f_i$ and let $S$ be the list of all such classes. 
	
	\item Delete all singleton classes from $S$,  increase $i$ by 1 and repeat from step 2 unless $i>k$ 
	
	\item Compute $\P(G)$ for all remaining graphs and partition the remaining classes in $S$ into equivalence classes for $\P(G)$ 

\end{enumerate}
In order to make this procedure as fast as possible we want to  choose a list of graph invariant such that this with low index are very fast to compute, since they 
will be computed fore the largest number of graphs, and hard to compute invariants will only be included for high indices if they have a significant ability to break 
the remaining equivalence classes into smaller ones. Ideally we want to find achieve singleton classes as early as possible.

In our search for graphs with equal homomorphism polynomials we first used the following invariants, in this order:
\begin{enumerate}
	\item The number of edges.
	
	\item The degree sequence.
	
	\item The pair: the number of triangles and the edge-connectivity of $G$.
	
	\item The matching polynomial.
	
	\item The pair: the independence polynomial of $G$ and the independence polynomial of the complement of $G$.
	
	\item The bivariate Ising polynomial.
	
	\item The homomorphism polynomial for $q=2$.
	
\end{enumerate}

Before going on to the case $q=3$ for $\P(G)$ we also partitioned the $q=2$ equivalence classes using the number of proper 3-colourings as an invariant, and before proceeding to $q=4$ we did the same for the $q=3$ equivalence classes using the number of proper 4-colourings.

\end{document}